\definecolor{webgreen}{rgb}{0,.5,0}
\definecolor{webbrown}{rgb}{.6,0,0}
\newtheorem{theorem}{Theorem}[section]
\newtheorem{lemma}[theorem]{Lemma}
\newtheorem{corollary}[theorem]{Corollary}
\newtheorem{proposition}[theorem]{Proposition}
\theoremstyle{definition}
\newtheorem{example}[theorem]{Example}
\newtheorem{remark}[theorem]{Remark}
\newcommand{\Enn} {\mathbb N}
\def\modd#1 #2{#1\ ({\rm mod}\ #2)}
\begin{document}

\author{Jason Bell}
\address[J. Bell and K. Hare]{Department of Pure Mathematics \\
University of Waterloo\\
Waterloo, ON  N2L 3G1 \\
Canada}
\email[J. Bell]{jpbell@uwaterloo.ca}
\thanks{Research of the first author supported by NSERC Grant 2016-03632.}
\author{Kathryn Hare}
\email[K.~Hare]{kehare@uwaterloo.ca}
\thanks{Research of the second author supported by NSERC Grant 2016-03719.}

\author{Jeffrey Shallit}
\address[J. Shallit]{School of Computer Science \\
University of Waterloo\\
Waterloo, ON  N2L 3G1 \\
Canada}
\email[J.~Shallit]{shallit@uwaterloo.ca}
\thanks{Rsearch of the third author supported by NSERC Grant 105829/2013.}

\title{When is an automatic set an additive basis?}

\begin{abstract}
We characterize those $k$-automatic sets $S$ of natural numbers that form an
additive basis for the natural numbers, and we show that this characterization is
effective.  In addition, we give an algorithm to
determine the smallest $j$ such that $S$ forms an additive basis of
order $j$, if it exists.
\end{abstract}
\subjclass[2010]{Primary 11B13, Secondary  11B85, 68Q45, 28A80}
\keywords{Additive basis, automatic set, finite-state automaton, Cantor sets}

\maketitle

\section{Introduction}
One of the principal problems of additive number theory is to determine,
given a set $S \subseteq \Enn$, whether
there exists a constant $j$ such that every natural
number (respectively, every sufficiently large natural number)
can be written as a sum of at most $j$ members of $S$ 
(see, e.g., \cite{Nathanson:1996}).
If such a $j$ exists, we say that $S$ is an {\it additive basis} (resp.,
an {\it asymptotic additive basis}) {\it of order $j$} for $\Enn$.

Variants of this problem date back to antiquity, with Diophantus asking whether every natural number could be expressed as a sum of four squares.  More generally, Waring's problem asks whether the set of $k$-th powers forms an additive basis for the natural numbers, which was ultimately answered in the affirmative by Hilbert \cite[Chapter 3]{Nathanson:1996}.  The problem of finding bounds on the number of $k$-th powers required to express all natural numbers and all sufficiently large natural numbers, as well as 
whether restricted subsets of $k$-th powers form additive bases,
continues to be an active area of research \cite{Vaughan&Wooley:1991,Wooley:1992,Wei&Wooley:2015}.  

Independent of Hilbert's work on Waring's problem, the famed 
Goldbach conjecture asks whether every even positive integer can be expressed as the sum of at most two prime numbers.  If true, this would then imply that every sufficiently large natural number is the sum of at most three prime numbers.  
Vinogradov \cite[Chapter 8]{Nathanson:1996}
has shown that every sufficiently large natural number can be expressed as the sum of at most four prime numbers, and so the set of prime numbers is an asymptotic additive basis for the natural numbers.  

From these classical beginnings,
a general theory of additive bases has since emerged,
and the problem of whether given sets of natural numbers form additive bases (or asymptotic additive bases) has been considered for many classes of sets.

If one adopts a computational point of view, subsets of natural numbers can be divided into two classes: computable sets (i.e., sets that can be produced using a Turing machine) and those sets that lie outside of the realm of classical computation.  Historically, the explicitly-given sets for which the problem of being an additive basis has been considered are computable, and a natural problem is to classify the computable subsets of the natural numbers that form additive bases.
However, a classical theorem
of Kreisel, Lacombe, and Shoenfield \cite{Kreisel&Lacombe&Shoenfield:1959}
implies that the question of whether
a given computable subset of $\Enn$ forms an additive basis
is, in general,
recursively unsolvable.
Even for relatively simple sets, the problem seems
intractable, as it applies to many sets of natural numbers, such as the set of twin primes, for which it is still open as to whether it is infinite, let alone whether it is an additive basis, which heuristics indicate should be the case \cite{Z:1979}.  
Thus it is of interest to identify some classes of sets for which the problem
is decidable.

One mechanism for producing computable sets is to fix a natural number $k\ge 2$ and consider natural numbers in terms of their base-$k$ expansions. A set of natural numbers can then be regarded as a sublanguage of the collection of words over the alphabet $\{0,1,\ldots ,k-1\}$.  In this setting, there is a coarse hierarchy, formulated by Chomsky, that roughly divides complexity into four nested classes: recursively enumerable languages (those that are produced using Turing machines); context-sensitive languages (those produced using linear-bounded non-deterministic Turing machines); context-free languages (those produced using pushdown automata); and regular languages (those produced using finite-state automata).  The simplest of these four classes is the collection of regular languages.  When one uses a regular sublanguage of the collection of words over $\{0,1,\ldots ,k-1\}$, the corresponding collection of natural numbers one obtains is called a $k$-\emph{automatic set} (see, for example, \cite{Allouche&Shallit:2003}).

In this paper we completely characterize those $k$-automatic sets
of natural numbers that form an additive basis or an asymptotic additive basis. In the case of a $k$-automatic set $S$ of natural numbers, there is a well-understood dichotomy: either $\pi_S(x):=\#\{n\le x\colon n\in S\}$ is ${\rm O}((\log\, x)^d)$ for some natural number $d$, or there is a real number $\alpha>0$ such that $\pi_S(x) = \Omega(x^\alpha)$ (see Section \ref{sec:basics} and specifically Corollary \ref{cor:sparse} for details).  In the case where $\pi_S(x)$ is asymptotically bounded by a power of $\log\, x$, we say that $S$ is \emph{sparse}.  Our first main result is the following theorem (see Theorem \ref{thm:main} and the remarks that follow).

\begin{theorem} Let $k\ge 2$ be a natural number 
and let $S$ be a $k$-automatic subset of $\Enn$.  Then $S$ forms an asymptotic additive basis for $\Enn$ if and only if the following conditions both hold:
\begin{enumerate}
\item $S$ is not sparse;
\item $\gcd(S)=1$.
\end{enumerate}
Moreover, if $S$ is a non-sparse set and $\gcd(S) = 1$,
then there exist effectively computable constants $M$ and $N$
such that every natural number greater than or equal to $M$ 
can be expressed as the sum of at most $N$ elements of $S$.
\label{thm:intro}
\end{theorem}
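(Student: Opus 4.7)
\emph{Necessity.} Both conditions are clearly forced. If $d := \gcd(S) > 1$, every sum of elements of $S$ lies in $d\Z$, so every bounded-order sumset of $S$ omits infinitely many integers. If $S$ is sparse with $\pi_S(x) \le c(\log x)^e$, then the $j$-fold sumset of $S$ has at most $\binom{\pi_S(x)+j}{j} = O((\log x)^{ej}) = o(x)$ elements below $x$, and so cannot be cofinite for any fixed $j$.

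\emph{Sufficiency.} Assume $S$ is non-sparse and $\gcd(S) = 1$; the plan has three stages. \emph{Stage 1 (Cantor-like substructure):} a non-sparse regular language has a state of its minimal DFA lying on two distinct simple cycles, and pumping yields computable digit-strings $u, v, w, u'$ over $\{0,\ldots,k-1\}$ with $|v| = |w| = m$ and $v \ne w$ such that every concatenation $u\, x_1 x_2 \cdots x_\ell\, u'$ with $x_i \in \{v,w\}$ is the base-$k$ representation of an element of $S$. Setting $K := k^m$, $\delta := [w]_k - [v]_k \ne 0$, and writing $U, U', V$ for the integer values of $u, u', v$, one obtains
\[
T_\ell := \Bigl\{\, U k^{\ell m+|u'|} + U' + k^{|u'|} V \tfrac{K^\ell-1}{K-1} + k^{|u'|}\delta \sum_{i=0}^{\ell-1}\epsilon_i K^i : \epsilon_i \in \{0,1\} \,\Bigr\} \;\subseteq\; S.
\]
\emph{Stage 2 (sumset fills an AP):} I invoke the elementary fact that the $(K-1)$-fold sumset of the $\{0,1\}$-digit base-$K$ integers of length $\le \ell$ is the full interval $[0,K^\ell-1]$, since every base-$K$ digit is the sum of at most $K-1$ digits from $\{0,1\}$. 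Applied inside the $(K-1)$-fold sumset of $T_\ell$, this produces an arithmetic progression of length $K^\ell$ and common difference $\delta k^{|u'|}$. The starting points are periodic in $\ell$ modulo $\delta k^{|u'|}$, so restricting $\ell$ to a common period and letting $\ell \to \infty$ the progressions glue into a cofinite arithmetic progression of some effective common difference $g$, realized inside a bounded-order sumset of $S$.

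\emph{Stage 3 and effectivity.} Because $\gcd(S) = 1$ and $g$ is computable, one can effectively select finitely many $s_1, \ldots, s_t \in S$ with $\gcd(g, s_1, \ldots, s_t) = 1$; the residues of the $s_i$ modulo $g$ then generate $\Z/g\Z$, so some bounded non-negative combination of the $s_i$ realizes every residue class. Adding such a combination to the AP produced in Stage 2 translates it through all $g$ residue classes, giving a cofinite AP in each class, and hence covers every integer $\ge M$ as a sum of at most $N$ elements of $S$, for computable $M$ and $N$. The smallest admissible $N$ is then determined by a bounded search.

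\emph{Main obstacle.} The crux is Stage 2: although $T_\ell$ is itself sparse, its $(K-1)$-fold sumset is shown to be an entire arithmetic progression, and progressions obtained for different $\ell$ must be glued into a single cofinite AP of controllable modulus. The elementary $\{0,1\}$-digit lemma handles the local structure, but the combinatorial bookkeeping with the $\ell$-dependent constants, together with the verification that the resulting modulus $g$ interacts cleanly with $\gcd(S) = 1$, is the most delicate part of the argument.
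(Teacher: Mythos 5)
Your Stage 1 is essentially the paper's Lemmas \ref{lem3} and \ref{lem4}, and your elementary ``every base-$K$ digit is a sum of at most $K-1$ digits from $\{0,1\}$'' observation is indeed a correct discrete analogue of the Cabrelli--Hare--Molter Cantor-sumset result that the paper invokes. Stage 3 (using $\gcd(S)=1$ to sweep through residue classes) is also the right kind of closing move, and it parallels the paper's Lemma~\ref{lem: gcd}, where the Borosh--Treybig bound is used to make it effective.

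The gap is in Stage 2, and it is fatal as stated. Fix $\ell$ and write $a_\ell$ for the left endpoint of the arithmetic progression $A_\ell$ of common difference $g'=\delta k^{|u'|}$ and length $K^\ell$ that you produce inside the $(K-1)$-fold sumset of $T_\ell$. Unwinding your formula,
\[
a_\ell = (K-1)\,U K^\ell k^{|u'|} + (K-1)U' + V k^{|u'|}(K^\ell - 1),
\]
so the start of $A_{\ell+1}$ minus the end of $A_\ell$ equals
\[
k^{|u'|}\Bigl[K^\ell\bigl((K-1)^2 U + (K-1)V - \delta\bigr) + \delta\Bigr].
\]
Since $\delta\le K-1$, this quantity is strictly positive whenever $U>0$ or $V>0$, and it grows like $K^\ell$. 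So $A_\ell$ and $A_{\ell+1}$ do \emph{not} overlap; they are separated by gaps of the same order as their lengths. Restricting $\ell$ to a residue class (your ``common period'') only aligns the residues modulo $g'$ and widens the gaps further, so the progressions cannot be glued into a single cofinite AP. The only case in which they do glue is $U=V=0$, i.e.\ $u$ empty and $v=0^m$, which one cannot arrange in general. Consequently, the bounded-order sumset you construct contains arbitrarily long APs of difference $g'$, but with exponentially growing holes between them --- this is very far from cofinite, or even syndetic, as written.

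What the paper does at this point is fundamentally different: it does not try to build a single long AP. Instead, given a target $M$, it \emph{chooses the parameters adapted to $M$}. In Proposition~\ref{prop: syndetic} the Cantor-sum representation is applied to a rescaled version $k^{L+s+2+j}x$ of $M$ (with $x$ the $k$-adic fractional version of $M$), and the index $j\in\{0,\ldots,s-1\}$ and the truncation level $\ell$ are chosen so that, after scaling back up, the resulting sum of elements of $S$ lands within a \emph{bounded} distance of $M$. This produces syndeticity of $S^{\le r}$ for an explicit $r$, not a cofinite AP. The second ingredient (Lemmas~\ref{gcd2} and \ref{lem: gcd}, via Borosh--Treybig) produces an explicit run of $c+1$ consecutive integers in a bounded-order sumset, and the proof of Theorem~\ref{thm:main} finishes by interleaving the syndetic set with this run. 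So the missing idea in your sketch is the target-dependent scaling that converts ``long APs with large gaps'' into ``every large $M$ is within bounded distance of the sumset''; once you have that, your Stage~3 needs to be replaced by (or upgraded to) the consecutive-block argument, since you only have syndeticity, not an AP, to work with.
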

We note that a necessary condition for a set $S$ to be an additive basis is that $1$ be in $S$.  If $S$ is not sparse and $\gcd(S) = 1$ and $1\in S$, 
then $S$ is an additive basis, and these conditions are necessary.  We give 
explicit upper bounds on $M$ and $N$ in terms of the number of states in the minimal automaton that accepts the set $S$, and we show that these bounds are in some sense the correct form for the type of bounds one expects to hold in general.  
An interesting feature of our proof is that it uses results dealing with sums of Cantor sets obtained by the second-named author in work with Cabrelli and Molter 
\cite{Cabrelli&Hare&Molter:1997}.

Our second main result is the following.
\begin{theorem}
Let $k\ge 2$ be a natural number and let 
$S$ be a $k$-automatic subset of $\Enn$.
There is an algorithm that determines whether the conditions of
Theorem~\ref{thm:intro} hold, and if so, also determines
the smallest possible $N$ in that theorem and the corresponding smallest
possible $M$.
\label{thm:intro2}
\end{theorem}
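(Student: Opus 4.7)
The plan is to reduce every step to decidable questions about $k$-automatic sets, using the B\"uchi--Bruy\`ere framework: every set first-order definable in the structure $\langle \Enn,+,V_k\rangle$ is $k$-automatic, a DFA for such a set can be constructed effectively from DFAs for its ingredients, and the first-order theory of this structure is decidable.

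First, I would decide whether the hypotheses of Theorem~\ref{thm:intro} hold. Sparseness is a syntactic property of the trimmed minimal DFA for $S$: the language has polynomial growth in the length of accepted words (equivalently, $\pi_S(x)$ is polylogarithmic in $x$) if and only if no state lies on two distinct cycles, and this is testable in polynomial time. To decide $\gcd(S)=1$, observe that $\gcd(S)$ divides $\min S$, and $\min S$ is effectively computable from the DFA (for example, by a breadth-first search for the shortest accepted word in radix order). For each divisor $d$ of $\min S$, the statement ``$\forall n\,(n \in S \to d \mid n)$'' is a first-order sentence over $\langle \Enn,+,V_k\rangle$, hence decidable, and the largest $d$ that passes equals $\gcd(S)$.

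Second, assuming both conditions hold, I would search for the smallest $N$. For each candidate $N \ge 1$, the sumset
\[
S_N \,:=\, \{\,a_1+\cdots+a_j \,:\, 1\le j\le N,\ a_i\in S\,\}
\]
is first-order definable from $S$ (the bounded quantifier over $j\le N$ expands into a finite disjunction), so a DFA for $S_N$ can be constructed effectively. Whether $\Enn\setminus S_N$ is finite is a decidable property of regular languages, since finiteness reduces to cycle detection in the trimmed DFA for the complement. By Theorem~\ref{thm:intro}, the smallest valid $N$ is bounded above by an effectively computable constant $\overline{N}$, so testing $N=1,2,\ldots,\overline{N}$ in turn must locate the minimum.

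Finally, given this smallest $N$, I would enumerate the finite set $F := \Enn\setminus S_N$ directly from its DFA: by the pumping lemma, every element of a finite regular language has a base-$k$ representation of length bounded by the number of states, so $F$ can be read off by exhaustive search, and one outputs $M := 1 + \max F$ (or $M := 0$ if $F = \emptyset$). The main obstacle is the reliance on the a priori upper bound $\overline{N}$ from Theorem~\ref{thm:intro}: without it, the search for the minimal $N$ would only be semi-decidable, as one could confirm that a given $N$ works but not that no smaller $N$ does. Once $\overline{N}$ is in hand, every remaining ingredient is a standard decidability result, and assembling them yields the desired algorithm.
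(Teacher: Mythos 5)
Your proposal is correct and follows essentially the same route as the paper: both rely on the decidability of the first-order theory of $\langle \Enn, +, V_k\rangle$ for the gcd test, iterate $N=1,2,\ldots$ up to the effective bound from Theorem~\ref{thm:intro} while checking at each step a first-order (equivalently, regular-language) condition saying the complement of the $N$-fold sumset is finite, and then extract the optimal $M$ from that finite complement. The only difference is cosmetic---you phrase the step for $N$ as a direct automaton construction plus finiteness test on the complement, while the paper writes an explicit $\exists M\,\forall n\ge M\,\exists x_1,\ldots,x_N$ predicate and similarly phrases the extraction of $M$ as reading off a DFA rather than enumerating the finite exceptional set---but these are two presentations of the same decision procedure.
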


The outline of this paper is as follows.  In Section \ref{sec:basics} we recall some of the basic concepts from the theory of regular languages and automatic sets---including the notion of a sparse automatic set---which play a key role in the statement of Theorem \ref{thm:intro}.  In Section \ref{sec:Cantor} we give some of the necessary background on Cantor sets and prove a key lemma involving these sets. In Section \ref{sec:main} we prove a strengthening of Theorem \ref{thm:intro} (see Theorem \ref{thm:main}) that gives explicit bounds on $M$ and $N$ appearing in the statement of the theorem.  In Section \ref{sec:algorithm}, we give an algorithm that allows one to find optimal bounds for given automatic sets and in Section \ref{sec:exam}, we give some examples to illustrate the usage of our algorithm.

\section{Basics}
\label{sec:basics}
We are concerned with words and numbers.  A {\it word} is a finite string of symbols over a finite alphabet $\Sigma$.   If $x$ is a word, then $|x|$ denotes its length (the number of symbols in it).  The {\it empty word} is the unique word of length $0$, and it is denoted by $\epsilon$.  

The {\it canonical base-$k$ expansion} of a natural number $n$ is the unique word over the alphabet $\Sigma_k = \{ 0,1,\ldots, k-1\}$ representing $n$ in base $k$, without leading zeros, starting with the most significant digit.  It is denoted $(n)_k$.  Thus, for example, $(43)_2 = 101011$.  If $w$ is a word,
possibly with leading zeros, then $[w]_k$ denotes the
integer that $w$ represents in base $k$.

A {\it language} is a set of words.  Three important languages are
\begin{itemize}
\item[(i)]  $\Sigma^*$, the set of all finite words over the alphabet $\Sigma$;
\item[(ii)]  $\Sigma^n$, the set of words of length $n$; and
\item[(iii)] $\Sigma^{\leq n}$, the set of words of length $\leq n$.
\end{itemize}
Given a set
$S\subseteq \Enn$, we write $(S)_k$ for the language
of canonical base-$k$ expansions of elements of $S$.

There is an ambiguity that arises from the direction in which base-$k$
expansions are read by an automaton.  In this article we always assume
that these expansions are read starting with the least significant digit.

We recall the standard asymptotic notation for functions from $\Enn$ to $\Enn$:
\begin{itemize}
\item $f = O(g)$ means that there exist constants $c> 0$, $n_0 \geq 0$ such that
$f(n) \leq c g(n)$ for $n \geq n_0$;
\item $f = \Omega(g)$ means that there exist constants $c> 0$, $n_0 \geq 0$ such that
$f(n) \geq c g(n)$ for $n \geq n_0$;
\item $f = \Theta(g)$ means that $f = O(g)$ and $f = \Omega(g)$.
\end{itemize}

Given a language $L$ defined over an alphabet $\Sigma$, its {\it growth function} $g_L (n)$ is defined to be $|L \ \cap \ \Sigma^n|$, the number of words in $L$ of length $n$.  If there exists a real number $\alpha > 1$ such that
$g_L (n) > \alpha^n$ for infinitely many $n$, then we say that
$L$ has {\it exponential growth}.  If there exists
a constant $c \geq 0$ such that
$g_L (n) = O(n^c)$, then we say that $L$ has
{\it polynomial growth}. 

A {\it deterministic finite automaton} or DFA is a quintuple
$M = (Q, \Sigma, \delta, q_0, F)$, where $Q$ is a finite nonempty
set of states, $\Sigma$ is the input alphabet, 
$q_0$ is the initial state, $F \subseteq Q$ is a set of
final states, and $\delta:Q \times \Sigma \rightarrow Q$
is the transition function.    The function $\delta$
can be extended to $Q \times \Sigma^* \rightarrow Q$ in the
obvious way.  The language accepted by $M$ is defined
to be $\{ x \in \Sigma^* \ : \ \delta(q_0, x) \in F\}$.
A language is said to be {\it regular} if there is a DFA
accepting it
\cite{Hopcroft&Ullman:1979}. 

A {\it nondeterministic finite automaton} or NFA is like a DFA, except
that the transition function $\delta$ maps $Q \times \Sigma$ to
$2^Q$.  A word $x$ is accepted if some path labeled $x$ causes the
NFA to move from the initial state to a final state.

We now state three well-known results about the growth functions of regular languages.  These lemmas follow by combining the results in, e.g.,
\cite{Ginsburg&Spanier:1966,Trofimov:1981,Ibarra&Ravikumar:1986,Szilard&Yu&Zhang&Shallit:1992,Gawrychowski&Krieger&Rampersad&Shallit:2010}.

\begin{lemma}
Let $L$ be a regular language.  Then $L$ has either polynomial
or exponential growth.  
\label{lem1}
\end{lemma}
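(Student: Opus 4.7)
The plan is to fix a DFA $M=(Q,\Sigma,\delta,q_0,F)$ accepting $L$ and analyse paths in its transition diagram combinatorially. Let $A$ be the $|Q|\times|Q|$ nonnegative integer matrix with $A_{p,q}=|\{a\in\Sigma:\delta(p,a)=q\}|$, so that $g_L(n)=\sum_{f\in F}(A^n)_{q_0,f}$. Call a state $q$ \emph{useful} if it is reachable from $q_0$ and co-reachable from $F$, and \emph{branching} if there exist distinct words $u\ne v$ with $|u|=|v|$ and $\delta(q,u)=\delta(q,v)=q$. The dichotomy follows from two cases: either some useful state is branching, or none is.

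First I would dispose of the exponential case. Suppose some useful state $q$ is branching, with witnesses $x$, $u\ne v$, $y$ satisfying $\delta(q_0,x)=q$, $\delta(q,u)=\delta(q,v)=q$, $|u|=|v|=\ell$, and $\delta(q,y)\in F$. Then for every $m\ge 1$ the $2^m$ words of the form $xw_1w_2\cdots w_m y$ with each $w_i\in\{u,v\}$ are pairwise distinct and all lie in $L$, so $g_L(|x|+m\ell+|y|)\ge 2^m$. This yields exponential growth with base $2^{1/\ell}>1$.

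For the polynomial case, assume no useful state is branching. Any accepting path of length $n$ decomposes according to the sequence of strongly connected components (SCCs) it visits in the transition graph restricted to useful states. Since the SCC quotient is a DAG on at most $|Q|$ vertices, this sequence has some length $k\le|Q|$. The core structural lemma I need is that within a non-branching SCC $C$, for any states $p,q\in C$ and any length $\ell$, there is at most one walk of length $\ell$ from $p$ to $q$ that stays inside $C$. Granted this, the total count of accepting paths of length $n$ is bounded by summing, over the finitely many SCC sequences and entry/exit choices, and over the $O(n^{k-1})=O(n^{|Q|-1})$ compositions $n=\ell_1+\cdots+\ell_k+(k-1)$ of the time spent in each SCC and on the inter-SCC transitions, a product of $1$'s for the walks inside each SCC. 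This gives $g_L(n)=O(n^{|Q|-1})$.

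The main obstacle is establishing the structural lemma cleanly. The cleanest route is contrapositive: suppose $C$ is a useful non-branching SCC containing two distinct walks $\alpha\ne\beta$ of the same length $\ell$ from $p$ to $q$. By strong connectivity of $C$, pick a walk $\gamma$ from $q$ back to $p$ staying in $C$; then $\alpha\gamma$ and $\beta\gamma$ are two distinct equal-length cycles at $p$, which witnesses that $p$ is branching, contradicting the hypothesis. Once this lemma is secured, the two cases exhaust all possibilities and the composition-counting estimate completes the polynomial bound, proving the dichotomy.
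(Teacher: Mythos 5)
Your argument is correct. The paper does not actually prove this lemma; Lemmas~\ref{lem1}--\ref{lem3} are stated with a pointer to the references (Ginsburg--Spanier, Trofimov, Ibarra--Ravikumar, Szilard--Yu--Zhang--Shallit, Gawrychowski et al.), so there is no in-paper proof to compare against. Your route is essentially the standard one from that literature. After trimming to useful states, you split on whether some useful state carries two distinct same-length loops: the pumping argument in the affirmative case is exactly the implication (e)~$\Rightarrow$~(b) of the paper's Lemma~\ref{lem3}, while the negative case rests on the correct key observation that inside a non-branching strongly connected component there is at most one walk of any given length between any given pair of states. Consequently an accepting path of length $n$ is determined, up to the bounded combinatorics of the SCC quotient (a DAG on at most $|Q|$ vertices, so the path meets at most $|Q|$ SCCs in order), by the composition of $n$ into the times spent in those SCCs, giving $g_L(n)=O(n^{|Q|-1})$. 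Your contrapositive proof of the key observation is sound, including the degenerate case $p=q$ (take $\gamma$ empty), and the fact that a walk between two states of an SCC along an accepting path never leaves the SCC follows from the DAG structure as you implicitly use. Two small remarks: the transfer matrix $A$ is set up but never used afterward, and to meet the paper's definition of exponential growth exactly --- $g_L(n)>\alpha^n$ for infinitely many $n$ with $\alpha>1$ --- you should fix some $\alpha\in(1,2^{1/\ell})$ rather than claim ``base $2^{1/\ell}$,'' since $g_L(n_m)\ge 2^{-(|x|+|y|)/\ell}\,(2^{1/\ell})^{n_m}$ and the multiplicative constant below $1$ is only absorbed when $\alpha$ is strictly below $2^{1/\ell}$.
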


Define $h_L (n) = |L \ \cap \ \Sigma^{\leq n}|$, the number of words of length $\leq n$.

\begin{lemma}  Let $L$ be a regular language.
The following are equivalent:
\begin{itemize}
\item[(a)]  $L$ is of polynomial growth;
\item[(b)]  there exists an integer
$d \geq 0$ such that $h_L (n) = \Theta(n^d)$;
\item[(c)]  $L$ is the finite union of languages of the form $z_0 x_1^* z_1 x_2^* \cdots z_{i-1} x_i^* z_i$ for words
$z_0, z_1, \ldots, z_i$, $x_1, x_2, \ldots x_i$;
\item[(d)]  there exist a constant $j$ and words
$y_1, y_2, \ldots, y_j$ such that
$L \subseteq y_1^* y_2^* \cdots y_j^*$.
\end{itemize}
\label{lem2}
\end{lemma}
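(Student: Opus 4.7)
The plan is to prove the four-way equivalence cyclically via $(b) \Rightarrow (a) \Rightarrow (c) \Rightarrow (d) \Rightarrow (a)$, with a side argument $(c) \Rightarrow (b)$ to close the loop on $h_L$. Three of these implications are routine. The implication $(b) \Rightarrow (a)$ is immediate from $g_L(n) \le h_L(n)$, since $h_L(n) = \Theta(n^d)$ forces $g_L(n) = O(n^d)$. For $(c) \Rightarrow (d)$, I would observe that $z_0 x_1^* z_1 \cdots z_{i-1} x_i^* z_i \subseteq z_0^* x_1^* z_1^* \cdots x_i^* z_i^*$, since each constant $z_k$ lies trivially in $z_k^*$; a finite union $L = L_1 \cup \cdots \cup L_m$ of such expressions then sits inside the single product obtained by concatenating the generator lists of the $L_r$, where each word of $L_r$ is embedded by taking exponent zero on the generators coming from the other summands. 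For $(d) \Rightarrow (a)$, a word of length $n$ in $y_1^* \cdots y_j^*$ is described by a tuple $(a_1,\ldots,a_j) \in \Enn^j$ with $\sum_k a_k |y_k| = n$, and the number of such tuples (assuming $|y_k|\ge 1$) is $O(n^{j-1})$, so $g_L(n) = O(n^{j-1})$ and $L$ has polynomial growth.

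The main obstacle is $(a) \Rightarrow (c)$, which requires a structural analysis of the minimal DFA $M = (Q,\Sigma,\delta,q_0,F)$ accepting $L$. I would restrict attention to the subgraph on the ``useful'' states (those both reachable from $q_0$ and co-reachable to $F$) and analyze its strongly connected components. The key combinatorial claim is that polynomial growth forces each non-trivial SCC to consist essentially of a single simple cycle: any two closed walks based at a common vertex must have labels that are powers of a common primitive word in $\Sigma^*$. This is shown by contrapositive: if some state $q$ admits two return labels that do not commute in the free monoid $\Sigma^*$, then a Fine-and-Wilf-style interleaving argument lets one freely concatenate these loops to produce $2^n$ distinct accepted words of length linear in $n$, contradicting $g_L(n) = O(n^c)$. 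Once each SCC is effectively a single cycle, any accepting run factors as $z_0 x_1^{a_1} z_1 x_2^{a_2} \cdots z_{i-1} x_i^{a_i} z_i$, where $x_k$ is the cycle label of the $k$-th SCC visited in order and the $z_k$ are fixed transitional segments between SCCs; since there are only finitely many possible SCC sequences and finitely many choices of entry/exit state within each SCC, $L$ decomposes as a finite union of expressions of the form (c).

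For the remaining step $(c) \Rightarrow (b)$, each summand $z_0 x_1^* \cdots x_i^* z_i$ has length generating function $t^{c_0} / \prod_k (1 - t^{|x_k|})$ where $c_0 = |z_0| + \cdots + |z_i|$, so its number of words of length $\le n$ is a quasi-polynomial in $n$ of degree exactly $i$ (assuming each $|x_k|\ge 1$). Let $d$ be the maximum value of $i$ over all summands. The upper bound $h_L(n) = O(n^d)$ follows by the union bound. For the matching lower bound, I would fix a summand of degree $d$ and count lattice points in the simplex $\sum_k a_k |x_k| \le n$, which yields $\Omega(n^d)$ tuples; these produce pairwise distinct words of $L$ provided each $x_k$ is chosen to be a primitive representative of its cyclic-conjugacy class, so that the map $(a_1,\ldots,a_i) \mapsto z_0 x_1^{a_1} \cdots x_i^{a_i} z_i$ is at worst finite-to-one. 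The one subtle technical point, and the place where the argument needs real care, is confirming that coincidences between different summands (i.e.\ words belonging to more than one summand) can only erode the count by a bounded constant factor, which preserves the $\Theta(n^d)$ rate and completes the cycle.
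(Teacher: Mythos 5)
The paper gives no proof of Lemma~\ref{lem2}; the text explicitly defers to the cited references (Ginsburg--Spanier, Trofimov, Ibarra--Ravikumar, Szilard--Yu--Zhang--Shallit, Gawrychowski et al.), so there is no in-paper argument to compare against. Judged on its own, your outline handles $(b)\Rightarrow(a)$, $(c)\Rightarrow(d)$, $(d)\Rightarrow(a)$, and $(a)\Rightarrow(c)$ essentially correctly; the strongly-connected-component analysis behind $(a)\Rightarrow(c)$ is the standard route, and the non-commuting-loops argument is sound (it is really Lemma~\ref{lem3} (c)$\Rightarrow$(d) in disguise).

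The genuine gap is the lower bound in $(c)\Rightarrow(b)$. You declare $d$ to be the largest $i$ occurring in a summand $z_0x_1^*z_1\cdots x_i^*z_i$, observe that the simplex $\sum_k a_k|x_k|\le n$ contains $\Omega(n^d)$ lattice points, and then claim the tuple-to-word map is finite-to-one once the $x_k$ are ``primitive representatives of their cyclic-conjugacy classes.'' But condition (c) hands you a fixed decomposition; you cannot rechoose the generators without changing the language, and the finite-to-one claim is simply false for the given data. Take the single summand $L=x_1^*z_1x_2^*$ with $x_1=ab$, $z_1=a$, $x_2=ba$. Both $x_1,x_2$ are primitive, yet $(ab)^{a_1}a(ba)^{a_2}=a(ba)^{a_1+a_2}$, so the language collapses to $a(ba)^*$ and the map is $(m{+}1)$-to-one on words of length $2m+1$. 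Here $h_L(n)=\Theta(n)$, not $\Theta(n^2)$: the erosion is by a polynomial factor, not the bounded constant factor you concede, and your ``subtle technical point'' about coincidences \emph{between} summands flags the wrong danger (that only affects the constant); the problem is coincidences \emph{within} one summand. What $(c)\Rightarrow(b)$ really needs is that $h_L(n)=\Theta(n^d)$ for \emph{some} integer $d$, which in general is strictly less than $\max_r i_r$ and is not read off combinatorially from the given expression. The standard fix is the rational generating-function argument: for a polynomially-bounded regular $L$ the series $\sum g_L(n)t^n$ is rational with all poles roots of unity, so $g_L$ is eventually a quasi-polynomial and $h_L$ is $\Theta$ of $n$ to one more than the maximal degree. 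Alternatively, do not prove $(c)\Rightarrow(b)$ from an arbitrary decomposition; instead derive $(a)\Rightarrow(b)$ directly from the \emph{canonical} SCC decomposition produced in your $(a)\Rightarrow(c)$ step, where the within-SCC paths are rigid enough (each SCC is a simple cycle in a DFA) to control injectivity honestly. As written, the lower bound does not close, so the cycle through (b) is broken.
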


\begin{lemma}   Let $L$ be a regular
language, accepted by a DFA or NFA $M = (Q, \Sigma,\delta, q_0,F)$.  The following are equivalent:
\begin{itemize}
\item[(a)]  $L$ is of exponential growth;
\item[(b)]  there exists a real number
$\alpha > 1$ such that $h_L (n) = \Omega(\alpha^n)$;
\item[(c)]  there exists a state $q$ of $M$ and words $w_0, x_0,x_1, z_0$
such that $x_0 x_1 \not= x_1 x_0$ and $\delta(q_0,w_0) =
\delta(q,x_0) = \delta(q,x_1) = q$, and
$\delta(q,z_0) \in F$;
\item[(d)]   there exist words $w,x,y,z$
with $xy \not= yx$
such that $w \{x,y\}^* z \subseteq L$;
\item[(e)] there exist words $s,t,u,v$
with $|t| = |u|$ and $t \not= u$
such that $s \{t,u\}^* v \subseteq L$.
\end{itemize}
\label{lem3}
\end{lemma}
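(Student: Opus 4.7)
The plan is to establish the cycle of implications $(a) \Rightarrow (c) \Rightarrow (d) \Rightarrow (e) \Rightarrow (b) \Rightarrow (a)$. Four of these implications are essentially bookkeeping, while $(a) \Rightarrow (c)$ carries the combinatorial content of the lemma.

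For $(c) \Rightarrow (d)$ I would take $(w, x, y, z) = (w_0, x_0, x_1, z_0)$; since $x_0$ and $x_1$ are both loops at $q$, every $u \in \{x_0, x_1\}^*$ gives $\delta(q_0, w_0 u z_0) = \delta(q, z_0) \in F$, so $w\{x,y\}^* z \subseteq L$. For $(d) \Rightarrow (e)$, observe that $xy \neq yx$ forces $x \neq y$; setting $(s, t, u, v) = (w, xy, yx, z)$ one has $|t| = |u|$, $t \neq u$, and $s\{t,u\}^* v \subseteq w\{x,y\}^* z \subseteq L$. For $(e) \Rightarrow (b)$, since $|t| = |u|$ and $t \neq u$, a concatenation $w_1 \cdots w_n$ with each $w_i \in \{t, u\}$ uniquely determines the tuple $(w_1,\ldots,w_n)$, so $L$ contains $2^n$ distinct words of length $|s| + n|t| + |v|$; hence $h_L(N) \geq 2^{\lfloor (N - |s| - |v|)/|t| \rfloor}$, which is $\Omega(\alpha^N)$ with $\alpha = 2^{1/|t|} > 1$. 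Finally, $(b) \Rightarrow (a)$ is the contrapositive of Lemmas \ref{lem1} and \ref{lem2}: a polynomial-growth language satisfies $h_L(n) = O(n^d)$ by Lemma \ref{lem2}, which is incompatible with $h_L(n) = \Omega(\alpha^n)$ for $\alpha > 1$, so Lemma \ref{lem1} forces exponential growth.

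The substantive step is $(a) \Rightarrow (c)$, which I would attack by contrapositive. Passing to the trim subautomaton on states reachable from $q_0$ and co-reachable from $F$, the negation of $(c)$ asserts that for every state $q$ the submonoid $L_q := \{x \in \Sigma^* : \delta(q,x) = q\}$ is commutative. By the classical free-monoid fact that two words commute if and only if they are powers of a common word, each $L_q$ is contained in $r_q^*$ for some single word $r_q$. Decomposing any accepted path as an alternation of \emph{simple} segments (traversing distinct states, hence of length at most $|Q|$) with loop powers $r_q^{k_q}$ at intermediate states then exhibits $L$ as a finite union of languages of the form $z_0 x_1^* z_1 \cdots z_{i-1} x_i^* z_i$, with the number of skeleton patterns bounded purely in terms of $|Q|$. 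Lemma \ref{lem2}(c) then yields polynomial growth, contradicting $(a)$.

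The main obstacle lies in that last structural step: turning the pointwise commutativity of loops into a single global bounded regular expression for $L$. This requires combining the free-monoid commutation lemma with a careful enumeration of the simple-path ``skeletons'' connecting intermediate states, so that the resulting decomposition depends only on the graph of $M$ and not on the length of the input. Everything else is counting or substitution.
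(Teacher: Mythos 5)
The paper does not give a proof of this lemma; it is stated as a combination of known results with citations to Ginsburg--Spanier, Trofimov, and others. So there is no proof in the paper to compare against, and your self-contained argument is evaluated on its own terms.

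Your four routine implications are correct: $(c)\Rightarrow(d)$ by direct substitution, $(d)\Rightarrow(e)$ via $(t,u)=(xy,yx)$, $(e)\Rightarrow(b)$ because $\{t,u\}$ is a uniform-length code so the $2^n$ concatenations are distinct words of the same length, and $(b)\Rightarrow(a)$ from the dichotomy of Lemma~\ref{lem1} together with Lemma~\ref{lem2}(b). The contrapositive route to $(a)\Rightarrow(c)$ via the free-monoid commutation theorem is also the right idea. The decomposition you flag as the main obstacle can be made precise with a last-visit scan of an accepting run $p_0,\ldots,p_n$: set $t_0=0$, and given $t_i$, let $s_i$ be the \emph{last} index with $p_{s_i}=p_{t_i}$, then set $t_{i+1}=s_i+1$. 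The segment $[t_i,s_i]$ is a loop at $q_i:=p_{t_i}$, hence by commutativity its label lies in $r_{q_i}^*$; the segment $[s_i,t_{i+1}]$ is a single letter; and the $q_i$ are pairwise distinct (after $s_i$ the run never revisits $q_i$), so there are fewer than $|Q|$ of them. Thus every word of $L$ lies in $r_{q_0}^*a_0r_{q_1}^*a_1\cdots a_{k-1}r_{q_k}^*$ for one of a bounded number of skeletons $(q_0,a_0,\ldots,q_k)$, and $L$ is contained in a finite union of such languages. One small correction: this argument establishes containment, not equality, so you should appeal to Lemma~\ref{lem2}(d) (or note that a finite union of bounded languages is bounded) rather than asserting that $L$ \emph{equals} a union of the form in Lemma~\ref{lem2}(c). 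With that adjustment the proof is complete.
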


We will also need the following result, which appears to be new.

\begin{lemma}
In Lemma~\ref{lem3} (e), the words $s, t, u, v$ can be
taken to obey the following inequalities:  
$|s|, |v| <n$ and $|t|, |u| < 3n$, where $n$ is the number of states
in the smallest DFA or NFA $M$ accepting $L$. 
\label{lem4}
\end{lemma}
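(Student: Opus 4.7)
My plan is to prove the lemma by refining the conclusion of Lemma~\ref{lem3}(c) so that its witnesses have controlled length. Given $q$, $w_0$, $x_0$, $x_1$, $z_0$ as in~(c), I would set $s := w_0$, $v := z_0$, $t := x_0 x_1$, and $u := x_1 x_0$. This automatically yields $|t| = |u| = |x_0| + |x_1|$, and $t \neq u$ follows from $x_0 x_1 \neq x_1 x_0$; since $t$ and $u$ are both loops at $q$, we have $s\{t,u\}^* v \subseteq L$. The problem therefore reduces to arranging $|w_0|, |z_0| < n$ and $|x_0| + |x_1| < 3n$.

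For $w_0$ and $z_0$, I would take each to label a shortest directed walk in $M$: the former from $q_0$ to $q$, the latter from $q$ to an accepting state. Any repeated state in such a walk would permit further shortening, so each walk is a simple path and has at most $n-1 < n$ edges.

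The essential step is bounding $|x_0| + |x_1|$. I would prove the following strengthening of Lemma~\ref{lem3}(c): \emph{if $L$ has exponential growth, then some state $q$ of $M$ admits two simple loops with non-commuting labels,} where a simple loop at $q$ is the label of a path that returns to $q$ without revisiting any other state. Such labels have length at most $n$, so taking $x_0$ and $x_1$ to be two such labels gives $|t| = |u| = |x_0| + |x_1| \leq 2n < 3n$, comfortably within the target bound.

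To prove this strengthening, I would argue by the contrapositive: assume that at every state $v$ of $M$, the labels of simple loops at $v$ pairwise commute. By the standard fact that a family of pairwise commuting nonempty words share a common primitive root (an iterated application of the Fine--Wilf theorem), the simple-loop labels at each $v$ are all powers of a primitive word $z_v$. A brief analysis of the strongly connected components of $M$ shows that within each SCC the $z_v$'s are cyclic rotations of a single common word, so the number of distinct labels of walks of length $N$ through each SCC is polynomially bounded in $N$. Chaining SCCs in the condensation then yields $|L \cap \Sigma^N| = O(N^d)$ for some $d$, contradicting exponential growth. The main obstacle of the whole argument is this SCC analysis, where the Fine--Wilf consequence does the real combinatorial work.
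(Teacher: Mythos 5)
Your approach differs genuinely from the paper's. The paper keeps the pair $(x_0,x_1)$ asymmetric: it chooses $x_0$ to be the shortest nonempty loop participating in a non-commuting pair and shows this forces $x_0$ to label a simple cycle ($|x_0|\le n$), but then it only manages $|x_1|<2n$ by a separate prefix/shortest-path argument, giving $|t|=|u|<3n$. You instead aim for the symmetric statement that \emph{some} state admits two non-commuting \emph{simple} loops, which would give $|x_0|,|x_1|\le n$ and hence $|t|=|u|\le 2n$. That stronger structural claim is in fact true, and your bound of $2n$ is strictly better than the paper's $3n-1$. (Indeed, in the example given in the remark following Lemma~\ref{lem4}, the state $q_2$ carries the two simple loops $a^n$ and $a^{n-2}b$, which do not commute and have total length $2n-1$; so the remark's optimality claim does not hold once one is free to choose the state $q$.)

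The gap is in the proof of your strengthening. You reduce to: if at every state the labels of simple loops pairwise commute, then $L$ has polynomial growth, and you propose to show this via an SCC analysis using Fine--Wilf, establishing that the $z_v$'s are cyclic rotations of one word within each SCC and then counting walk labels. The difficulty is a bootstrapping issue: to conclude that the $z_v$'s are conjugate within an SCC you need arbitrary (not merely simple) loops at each state to be powers of $z_v$, which is not immediate from the hypothesis; and the subsequent polynomial counting of walk labels through and across SCCs is left entirely to the reader. A cleaner route avoids re-deriving polynomial growth: show directly, by strong induction on $|W|$, that every loop $W$ at every state $q$ is a power of $z_q$. If $W$ is simple this is the hypothesis. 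If $W$ revisits $q$, split $W=W_1W_2$ and apply the induction. If $W$ revisits only some $q'\ne q$, write $W=ABC$ with $B$ the (nonempty) excursion at $q'$ and $A,C$ the nonempty arms; by induction $AC=z_q^a$, $B=z_{q'}^b$, and $CA=z_{q'}^{a}$, whence $z_{q'}$ is the cyclic rotation of $z_q$ by $|A|$ and a direct computation gives $W=ABC=z_q^{a+b}$. Thus all loops at every state commute, and the contrapositive of Lemma~\ref{lem3}(c) gives polynomial growth, with no SCC bookkeeping needed. With this lemma in hand, the rest of your argument (the bounds on $w_0,z_0$, the choice $t=x_0x_1$, $u=x_1x_0$) is correct.
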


\begin{proof}

Consider those quadruples of words 
$(w_0, x_0, x_1, z_0)$ satisfying the conditions
of Lemma~\ref{lem3} (c), namely, that
there is a state $q$ of $M$ such that
$\delta(q_0, w_0) = \delta(q, x_0) = \delta(q,x_1) = q$, and
$\delta(q,z_0) \in F$, and
$x_0 x_1 \not= x_1 x_0$.
We can choose $w_0$ and $z_0$ minimal
so that no state is encountered more than once via the paths 
$P_{w_0}$ and $P_{z_0}$ through $M$
labeled $w_0$ and $z_0$, respectively.  Thus without loss
of generality we can assume $|w_0|,|z_0|<n$. 

Next, among all such $x_0, x_1$, assume $x_0$ is a shortest
nonempty word and $x_1$ is a shortest nonempty word paired with
$x_0$.  Consider the set of states encountered when
going from $q$ to $q$ via the path $P_{x_0}$ labeled $x_0$.
If some state (other than $q$) is
encountered twice or more, 
this means there is a loop we can cut out and find
a shorter nonempty word $x'_0$ with $\delta(q,x'_0)=q$.
By minimality of the length of $x_0$,
we must have that $x'_0$ commutes with all words $w$ 
such that $\delta(q,w)=w$.
In particular, $x'_0$ commutes with $x_0$ and $x_1$.
Since the collection of words that commute with a non-trivial
word consists of powers of a common word \cite[Prop.~1.3.2]{Lothaire:1997},
we see that if this were the case, then
$x_0$ and $x_1$ would commute, a contradiction.
Thus $|x_0| \leq n$.  By construction $|x_1|\ge |x_0|$.
If $x_0$ is a proper prefix of $x_1$, then we have $x_1=x_0 x'_1$
for some nonempty word $x'_1$ with $\delta(q,x'_1)=q$,
and since $x_0 x_1\neq x_1 x_0$,
we have $x_0 x_0 x'_1\neq x_0 x'_1 x_0$.
Cancelling $x_0$ on the left gives $x_0 x'_1\neq x'_1 x_0$.
But this contradicts minimality of the length of $x_1$.  

Thus $x_1$ has some prefix $p$ with
$|p|\le |x_0|$ such that $x_1=p p' $ and $p$ is not a prefix of $x_0$.
Let $q'=\delta(q,p)$.  If $q'=q$ then we have $\delta(q,p)=q$ 
and $xp\neq px$ since $p$ is not a prefix of $x$.
Thus in this case, by minimality of $x_1$, we have $x_1=p$ and so $|x_1|\le n$.
Thus we may assume that $q'\neq q$.  Then $\delta(q',p')=q$.
Let $u$ be the label of a shortest path from $q'$ to $q$.
Then $|u|<n$ since by removing loops, we may assume the path $P_u$
visits no state more than once and it does not revisit $q'$.
Observe that $|pu|<2n$ and $\delta(q,pu)=q$.
Moreover, $x pu \neq pux$ since $p$ is not a prefix of $x$.
Thus, by the minimality of $x_1$, we have $|x_1|\le |up|<2n$.  

Thus we can assume that
$|x_0|\leq n$ and $|x_1| < 2n$.
Setting $s = w_0$, $t = x_0x_1$, $u = x_1 x_0$, and $v = z_0$
gives the desired inequalities.  
\end{proof}

\begin{remark}
The bound $3n-1$ in Lemma~\ref{lem4} is optimal.
For example, consider an NFA $M = (\{q_1, \ldots, q_n\},
\{ a,b\} , \delta, q_1, \{q_1 \} )$
with $n$ states
$q_1, q_2, \ldots, q_n$ connected in a directed cycle
with transitions labeled by $a$.    Add a directed edge
labeled $b$ from $q_n$ back to $q_2$.  Then
the smallest words obeying the conditions
are $x = a^n$ of length $n$ and $y = a^{n-1} b a^{n-1}$ of length $2n-1$.
Then $t = xy$ and $u = yx$ and $|t| = |u| = 3n-1$.
\end{remark}

\begin{theorem}
Given a regular language represented by a DFA or NFA, we can decide in linear time whether the language has polynomial or exponential growth.
\label{thm1}
\end{theorem}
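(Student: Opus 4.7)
By Lemma~\ref{lem1}, $L$ has either polynomial or exponential growth, so the task reduces to deciding the latter. I will build on Lemma~\ref{lem3}(c): $L$ has exponential growth iff some state $q$ of $M$ that is reachable from $q_0$ and co-reachable from some final state admits two closed walks with non-commuting labels. Because two words commute iff they are powers of a common word \cite[Prop.~1.3.2]{Lothaire:1997}, and because every closed walk at $q$ stays inside the strongly connected component (SCC) of $q$, the decision reduces to a per-SCC test: does each useful SCC admit a single primitive word $p$ such that every closed walk inside it has label in $p^*$?

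The algorithm proceeds in three phases, each linear in $|Q|+|\delta|$. First, I compute the set of useful states (reachable from $q_0$, co-reachable from $F$) via one forward and one backward graph traversal, and restrict $M$ accordingly. Second, I compute the SCCs of the restriction via Tarjan's algorithm. Third, for each nontrivial SCC $C$ I attempt to construct a consistent \emph{cyclic coloring} $c\colon C \to \Z/m\Z$ together with a primitive period $p$ of length $m$: seed the coloring at an arbitrary base vertex $q^* \in C$ with $c(q^*)=0$, determine $p$ from the label of the first closed walk encountered during a DFS within $C$ (using the KMP failure function to extract its primitive root in linear time), and then verify that every intra-$C$ edge $q \to q'$ labeled $a$ satisfies $c(q') \equiv c(q)+1 \pmod{m}$ and $a = p[c(q)]$. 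The algorithm outputs ``polynomial'' iff every SCC passes.

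The forward direction of correctness is immediate: if every SCC admits a valid coloring, then every loop at every useful vertex has label in a common cyclic monoid, so any two loops commute. The main obstacle I anticipate is the converse---showing that a local inconsistency at an edge $e$ of some SCC really produces two non-commuting loops at a common vertex, rather than merely two distinct loops at different vertices. I plan to handle this by splicing $e$ with the DFS-tree path returning to $q^*$ to form a closed walk at $q^*$ whose label lies outside any rotation of $p^*$, and then comparing it with a tree-defined loop at $q^*$ whose label is a genuine power of the corresponding rotation of $p$; primitivity of $p$ then forces the two labels to be non-commuting. Once this converse is in place, Lemma~\ref{lem3} completes the equivalence, and the total running time is linear in both the DFA and the NFA cases.
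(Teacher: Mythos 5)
The paper does not actually prove this statement---it simply cites \cite{Gawrychowski&Krieger&Rampersad&Shallit:2010}---so your proposal is filling in a real argument. Your strategy (trim the automaton, decompose into strongly connected components, and test each nontrivial SCC for a consistent cyclic labeling by a primitive period word) is the standard approach and is sound at the outline level: the reduction from Lemma~\ref{lem3}(c) to a per-SCC commutativity test is correct, and the linear-time accounting via trimming, Tarjan, KMP, and a final edge scan checks out.

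However, the algorithm as you describe it has a concrete bug. You fix $c(q^*)=0$ at an arbitrary base vertex and take $p$ to be the primitive root of the first closed walk seen in the DFS, but that walk is in general a closed walk at some ancestor $q'$ of depth $d$, and its primitive root is the \emph{rotation} by $d\bmod m$ of the period appropriate to $q^*$. When that rotation is nontrivial, your check falsely rejects a polynomial SCC. Concretely, take the SCC on $q_0,q_1,q_2,q_3$ with edges $q_0\xrightarrow{a}q_1$, $q_1\xrightarrow{b}q_2$, $q_2\xrightarrow{a}q_1$, $q_2\xrightarrow{a}q_3$, $q_3\xrightarrow{b}q_0$. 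All loops commute (a valid labeling is $p'=ab$ with $c(q_0)=c(q_2)=0$, $c(q_1)=c(q_3)=1$), but a DFS from $q_0$ first sees the back edge $q_2\to q_1$, giving cycle label $ba$ and hence $p=ba$; already the tree edge $q_0\xrightarrow{a}q_1$ then fails the test $a=p[c(q_0)]=p[0]=b$. The fix is simple: seed the coloring at the vertex $q'$ on the discovered cycle rather than at $q^*$ (equivalently, rotate $p$ by $-d\bmod m$ before verifying). Once this is done, the converse you were worried about follows by contraposition, not by splicing: if all loops at $q'$ commute, they are powers of a single primitive $p'$ by \cite[Prop.~1.3.2]{Lothaire:1997}; the assignment $c(v)=(\text{length of any path from }q'\text{ to }v)\bmod |p'|$ is then well defined and edge-consistent, and your algorithm computes exactly $(p',c)$, so it cannot reject. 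Your splicing plan is therefore unnecessary, and as stated it is also ill-posed, since a directed DFS tree rooted at $q^*$ has no ``path returning to $q^*$'' and hence no ``tree-defined loop at $q^*$.''
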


\begin{proof}
See, for example, \cite{Gawrychowski&Krieger&Rampersad&Shallit:2010}.
\end{proof}

Now let us change focus to sets of integers.  
Given a subset $S\subseteq \mathbb{N}$ we define \begin{equation}
\pi_S(x)=\{n\le x\colon n\in S\}.\end{equation}    If there exists an
integer $d \geq 0$ such that $\pi_S (x) = O( (\log x)^d)$, then we say that $S$ is {\it sparse}.  Otherwise we
say $S$ is {\it non-sparse}.

Then the corollary below follows immediately from the above results.

\begin{corollary}
\label{cor:sparse}
Let $k \geq 2$ be an integer and $S$ be a $k$-automatic
subset of $\Enn$.  
Then $S$ is non-sparse iff there exists a real number $\alpha>0$ such that $\pi_S(x) = \Omega(x^\alpha)$.
\end{corollary}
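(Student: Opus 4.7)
The plan is to translate the question about the growth of $\pi_S(x)$ into a question about the growth function of the regular language $L := (S)_k$ of canonical base-$k$ expansions, and then invoke the dichotomy provided by Lemmas~\ref{lem1}--\ref{lem3}. The key observation is that for $n \geq 1$ the word $(n)_k$ has length $\lfloor \log_k n\rfloor + 1$, so the map $n \mapsto (n)_k$ identifies $\{n \in S : 1 \leq n \leq x\}$ with the set of words in $L$ of length at most $N(x) := \lfloor \log_k x \rfloor + 1$. Hence $\pi_S(x) = h_L(N(x)) + O(1)$, where the $O(1)$ absorbs the conventions for $0$ and the empty word.

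Next I would split into the two cases furnished by Lemma~\ref{lem1}. If $L$ has polynomial growth, Lemma~\ref{lem2}(b) gives an integer $d \geq 0$ with $h_L(n) = \Theta(n^d)$, so $\pi_S(x) = \Theta((\log x)^d)$; in particular $S$ is sparse and $\pi_S(x)$ is not $\Omega(x^\alpha)$ for any $\alpha > 0$. If instead $L$ has exponential growth, Lemma~\ref{lem3}(b) yields a real $\beta > 1$ with $h_L(n) = \Omega(\beta^n)$, and substituting $n = N(x)$ produces
\[
\pi_S(x) \;=\; \Omega\bigl(\beta^{\log_k x}\bigr) \;=\; \Omega\bigl(x^{\log_k \beta}\bigr),
\]
with $\log_k \beta > 0$, so $S$ is non-sparse and satisfies the desired lower bound with $\alpha = \log_k \beta$.

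These two cases are mutually exclusive and exhaustive, so the polynomial/exponential dichotomy for $L$ aligns exactly with the sparse/non-sparse dichotomy for $S$: non-sparsity rules out the polynomial branch and therefore forces $\pi_S(x) = \Omega(x^\alpha)$, while conversely any lower bound of the form $\Omega(x^\alpha)$ with $\alpha > 0$ dominates every polylogarithmic function and hence rules out sparsity. There is no substantive obstacle here; the whole argument is the logarithmic change of variables $n \leftrightarrow \log_k x$, which turns $\Theta(n^d)$ into $\Theta((\log x)^d)$ and $\Omega(\beta^n)$ into $\Omega(x^{\log_k \beta})$, combined with the regular-language dichotomy already recorded in this section.
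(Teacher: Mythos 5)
Your proof is correct and is essentially the argument the paper intends: the paper states the corollary ``follows immediately from the above results,'' and your write-up is simply the spelled-out version of that, translating $\pi_S(x)$ into $h_L$ of the regular language $L=(S)_k$ via the logarithmic change of variables and then applying the polynomial/exponential dichotomy of Lemmas~\ref{lem1}--\ref{lem3}. There is no divergence in approach, only in the level of detail.
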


Given sets $S,T$ of real numbers, we let $S+T$ denote the
set $$\{ s+t \ : \ s \in S, t \in T \}.$$  Furthermore, we let
$S^j = \overbrace{S + S + \cdots + S}^j$; this is called the
$j$-fold sum of $S$.  We let
$S^{\leq j} = \bigcup_{1 \leq i \leq j} S^i$.  Note that
$S^{\le j}$ and $S^j$ denote,
respectively,
the set of numbers that can be written as a sum of at most $j$ elements of $S$,
and those that can be written as a sum of exactly $j$ elements of $S$.
Finally, if $S$ is a set of real numbers and $\alpha$ is a real number,
then $\alpha S = \{ \alpha x  \ : \ x \in S \}$.

\section{Sums of Cantor sets}
\label{sec:Cantor}
In this section, we quickly recall the basic notions we will make use of concerning Cantor sets.  Specifically, we will be dealing with central Cantor sets, which we now define.  Let $(r_k)_{k\ge 1}$ be a sequence of real numbers
in the half-open interval $(0, {1 \over 2}]$.
Given real numbers $\alpha < \beta$,
we define a collection of closed intervals $\{C_w\ \ : w\in \{0,1\}^*\}$,
where each $C_w \subseteq [\alpha,\beta]$, inductively as follows. 
We begin with $C_\epsilon =[\alpha,\beta]$.  Having defined $C_w$ for all binary words of length at most $n$, given a word $w$ of length $n+1$, we write $w=w' a$ with $|w'|=n$ and $a\in \{0,1\}$.  If $a=0$, we define $C_w$ to be the closed interval uniquely defined by having the same left endpoint as $C_{w'}$ and satisfying $|C_{w}|/|C_{w'}|=r_{n+1}$; If $a=1$, we define $C_w$ to be the closed interval uniquely defined by having the same right endpoint as $C_{w'}$ and satisfying $|C_{w}|/|C_{w'}|=r_{n+1}$.  We then take $C_n$ to be the union of the $C_w$ as $w$ ranges over words of length $n$.  It is 
straightforward to see that 
$$C_0\supseteq C_1\supseteq C_2\supseteq \cdots,$$ 
and the intersection of these sets is called the \emph{central Cantor set} 
associated with the ratios $r_k$ and initial interval $[\alpha, \beta]$.  The associated real numbers $r_k$ are called the associated \emph{ratios of dissection}, and in the case when there is a fixed $r$ such that $r_k=r$ for every $k\ge 1$, we simply call $r$ the ratio of dissection.  A key example is the classical ``middle thirds'' Cantor set, which is the central Cantor set with ratio of dissection ${1 \over 3}$ and initial interval $[0,1]$.

Let $k\ge 2$ be a natural number and let $u,y,z\in \Sigma_k^*$ with $|y| = |z|$ and $y\neq z$.  In particular, $y$ and $z$ are nonempty. We define
$C(u;y,z)$ to be the collection of real numbers whose base-$k$ expansion is of the form
$0.u w_1w_2w_3\cdots $ with each $w_i\in \{y,z\}$.  For example, when $k=3$, $u$ is the empty word, $y=0$, and $z=2$, $C(u;y,z)$ is the usual Cantor set. A key lemma used in our considerations rests on a result of Cabrelli, the second-named author, and Molter \cite{Cabrelli&Hare&Molter:1997}, which says that a set formed by taking the sum of $N$ elements from a Cantor set with a fixed ratio of dissection is equal to an interval when $N$ is sufficiently large.  We use this result to prove the following lemma.
\begin{lemma} Let $k\ge 2$ and $t\ge 1$ be natural numbers and let $u,y,z\in \Sigma_k^*$ with $|y| = |z|$ and $y\neq z$.  Suppose that $|u|=L$ and $|x|=|y|=s$.  Then every real number $\gamma \in [k^{L+s+1}, k^{L+s+1+t}]$ can be expressed as a sum of at most $k^{2L+2s+t+1}$ elements from $C(u;y,z)$.
\label{lem: hare}
\end{lemma}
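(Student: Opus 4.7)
The plan is to recognise $C(u;y,z)$ as an affine copy of a central Cantor set of constant dissection ratio $r = k^{-s}$, invoke the Cabrelli--Hare--Molter theorem \cite{Cabrelli&Hare&Molter:1997} so that $j$-fold sums of this Cantor set are intervals once $j$ is large enough, and then use a short overlap computation to conclude that, as $j$ ranges up to $k^{2L+2s+t+1}$, these intervals merge into a contiguous range covering $[k^{L+s+1}, k^{L+s+1+t}]$.

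Write $\alpha = [u]_k/k^L$, $a = [y]_k$, $b = [z]_k$, and assume without loss of generality that $a < b$. Unfolding the definition, the elements of $C(u;y,z)$ are exactly the numbers $\alpha + \sum_{i \ge 1} c_i/k^{L+is}$ with each $c_i \in \{a,b\}$. Its minimum $m$ and maximum $M$ are realised by taking all $c_i = a$ or all $c_i = b$; a direct computation gives $M - m = (b-a)/(k^L(k^s-1))$, whence $M \ge 1/k^{L+s}$ and $m < 2$. The affine map $x \mapsto k^{L+s}(x - \alpha)$ identifies $C(u;y,z)$ with a central Cantor set of constant dissection ratio $r = k^{-s}$; because $k \ge 2$ and $s \ge 1$ one has $r \le 1/2$, placing us in the setting of Section~\ref{sec:Cantor}.

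Applying the Cabrelli--Hare--Molter theorem to this Cantor set produces an explicit threshold $N_0$ of order $k^s$ such that $j \cdot C(u;y,z) = [jm, jM]$ for every $j \ge N_0$. Consecutive intervals $[jm, jM]$ and $[(j+1)m, (j+1)M]$ overlap exactly when $j \ge \eta := m/(M-m)$, and the explicit formulae above yield $\eta < k^{L+s}$. Setting $J = \max(N_0, \lceil \eta \rceil) \le k^{L+s}$, we conclude that $C(u;y,z)^{\le N}$ contains the contiguous interval $[Jm, NM]$ for every $N \ge J$.

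Taking $N = k^{2L+2s+t+1}$ we obtain $NM \ge N k^{-(L+s)} = k^{L+s+1+t}$ at the upper end and $Jm < 2 k^{L+s} \le k^{L+s+1}$ at the lower end (using $k \ge 2$), so $[Jm, NM] \supseteq [k^{L+s+1}, k^{L+s+1+t}]$ as required. The delicate point is calibrating both the Cabrelli--Hare--Molter threshold $N_0$ and the overlap parameter $\eta$ so that each is bounded by $k^{L+s}$; the slack built into the exponent $2L + 2s + t + 1$ then leaves just enough room for the coverage to close.
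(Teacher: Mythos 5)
Your proof is correct and follows essentially the same route as the paper's: identify $C(u;y,z)$ as an affine image of a central Cantor set with ratio of dissection $k^{-s}$, apply the Cabrelli--Hare--Molter threshold result to make $j$-fold sums into intervals, and then use the overlap estimate $\eta < k^{L+s}$ together with $m<2$ and $M \ge k^{-(L+s)}$ to close the coverage of $[k^{L+s+1}, k^{L+s+1+t}]$. The quantitative bookkeeping ($M-m = (b-a)/(k^L(k^s-1))$, $N_0 \approx k^s-1$, $J \le k^{L+s}$) matches the paper's computation, with your $\eta$ bound being marginally sharper than the paper's $k^{L+s}+k^s$.
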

 \begin{proof} Let $s=|y|=|z|$ and write
$y=y_{1}\cdots y_{s}$, $z=z_{1}\cdots z_{s}$, and $u=u_{1}\cdots u_{L}$. 
Define 
\begin{align*}
Y &=\sum_{j=1}^{s}y_{j}k^{-j}\\
Z &=\sum_{j=1}^{s}z_{j}k^{-j}\\
U &=\sum_{j=1}^L u_j k^{-j}. 
\end{align*}
We may assume without loss of generality that $Y<Z$.
Consider the compact set $C=C(\epsilon ;y,z)$, the numbers whose base-$k$
expansion is of the form $0.x_{1}x_{2}x_{3}\cdot \cdot $ where $x_{i}\in
\{y,z\}$. The two
contractions, $S_{1}(x)=k^{-s}x+Y$ and $S_{2}(x)=k^{-s}x+Z$, clearly map $C$ into $C$, hence $C$ contains $S_{1}(C)\, \cup \, S_{2}(C)$. We claim that this containment is in fact an equality.  To see this, let $x$
be a real number with base-$k$ expansion $0.x_{1}x_{2}x_{3}\cdots$ with 
$x_{i}\in \{y,z\}$.  Then $x$ is mapped to $0.y x_{1}x_{2}\cdots$ under $S_{1}$ and to $0. z x_{1}x_{2}\cdot \cdot \cdot $ under $S_{2}$.  In particular, $x=S_1(0.x_2x_3\cdots)$ if $x_1=y$ and $x=S_2(0.x_2x_3\cdots)$ if $x_1=z$.

Next, consider $C'$, the set obtained by beginning with the
non-trivial interval $[\alpha ,\beta ]$ where $\alpha =(1-k^{-s})^{-1}Y$ and 
$\beta =(1-k^{-s})^{-1}Z$, and forming the central Cantor set with ratio of
dissection $k^{-s}$. 

Then $C'$ also has the property that $C'=S_1(C')\cup S_2(C')$.  Indeed, the set $C'_n$ that arises at level $n$ in the
Cantor set construction is the union of the images of $[\alpha ,\beta ]$ under the $n$-fold compositions $S_{j_{1}}\circ \cdots \circ S_{j_{n}}$,
where $j_{i}\in \{1,2\}$ for $i=1,\ldots ,n$.  Then $C'$ is simply the intersection of the $C'_n$ for $n\ge 1$.

Since there is a unique non-empty compact set with the above invariance property under the two
contractions $S_1$ and $S_2$, we must have $C=C'$. Thus $C$ has a central Cantor set
construction with ratio of dissection $k^{-s}$. It now follows
from \cite[Prop.~2.2]{Cabrelli&Hare&Molter:1997} 
that the $m$-fold sum $C^m$
equals the interval $[m\alpha ,m\beta ]$ 
whenever $m\geq k^{s}-1$.

The set $C(u;y,z)$ is equal to $\sum_{j=1}^{L}u_{j}k^{-j}+k^{-L}C:=U+k^{-L}C$.
Observe that if $C^m =[c,d],$ then $(k^{-L}C)^m =
[k^{-L}c,k^{-L}d]$ and the $m$-fold sum of $U+k^{-L}C$ is simply the interval $mU+[k^{-L}c,k^{-L}d]$. Thus for all $m\geq k^{s}-1$, $C(u;y,z)^m$ contains
the non-trivial interval $mI$ where $I=[U+k^{-L}\alpha ,U+k^{-L}\beta ]$.  The 
intervals $mI$ and $(m+1)I$ overlap whenever 
$$(m+1) (U+k^{-L}\alpha) \le m(U+k^{-L}\beta),$$ 
which occurs precisely when $m\ge (k^L U + \alpha) (\beta-\alpha)^{-1}$.  Since $\beta-\alpha\ge 1/k^s$ and $U,\alpha\le 1$, 
we see that for $m\ge k^{L+s}+k^s$, the intervals
$mI$ and $(m+1)I$ overlap.  Thus
$$\bigcup_{m\ge k^{L+s}+k^s} mI \supseteq [k^{L+s+1},\infty).$$

Consequently, we have 
that the interval $[k^{L+s+1},k^{L+s+1+t}]$ is contained in the union of the $m$-fold sums of $C(u;y,z)$ with $m= k^{L+s}+k^s,\ldots ,N$ whenever $N$ is such that
$N(U+k^{-L}\beta)\ge k^{L+s+t+1}$.  Since $U+k^{-L}\beta\ge k^{-L-s}$ we see that
we can take $N= k^{2L+2s+t+1}$.
This proves that every number in $
[k^{L+s+1},k^{L+s+1+t}-1]$ can be expressed as a sum of at most $N$ elements
from $C(u;y,z)$.
\end{proof}

\section{The first main result}
\label{sec:main}
In this section we prove the following theorem.

\begin{theorem} Let $k\ge 2$ be a natural number and let $S$ be a non-sparse $k$-automatic subset of $\mathbb{N}$ with $\gcd(S)=1$.  Then there exist effectively computable natural numbers $N=N(S)$ and $M=M(S)$ such that every natural number $n\ge M$ can be expressed as a sum of at most $N$ elements from $S$.  Moreover, if the minimal DFA accepting $S$ has $m$ states, then $N\le 5k^{16m+3}$ and $M\le 3k^{16m+5}$.  
\label{thm:main} 
\end{theorem}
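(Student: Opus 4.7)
My plan is to combine a pumping-style extraction from the DFA with the Cantor-set covering result (Lemma~\ref{lem: hare}), and then use $\gcd(S)=1$ to patch up the residue obstruction.

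Since $S$ is non-sparse, $(S)_k$ has exponential growth by Corollary~\ref{cor:sparse} and Lemma~\ref{lem1}, so Lemmas~\ref{lem3}(e) and~\ref{lem4} supply words $s,t,u,v$ with $|s|,|v|<m$, $|t|=|u|<3m$, $t\ne u$, and $s\{t,u\}^*v\subseteq (S)_k$. Write $L=|s|$, $R=|v|$, $\ell=|t|$, $\alpha=[s]_k$, $\beta=[v]_k$, $T=[t]_k<U=[u]_k$ (WLOG). For each $j\ge 0$ let $S_j=\{[swv]_k:w\in\{t,u\}^j\}\subseteq S$; each element has the explicit form $\alpha k^{j\ell+R}+k^R\sum_{i=1}^j\epsilon_i k^{(j-i)\ell}+\beta$ with $\epsilon_i\in\{T,U\}$.

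Next, I would compute $N\cdot S_j$ by interchanging the sums over positions $i$ and over the $N$ summands. A direct calculation shows that for $N\ge k^\ell-1$ the $N$-fold sumset $N\cdot S_j$ is an arithmetic progression of common difference $k^R(U-T)$ filling the range $[N\min S_j,\,N\max S_j]$, the ``no-carry'' threshold $N\ge k^\ell-1$ being exactly what is needed for the position-wise sums to amalgamate into a full integer interval (scaled by $U-T$). Lemma~\ref{lem: hare}, applied to the Cantor set $C(s;t,u)$ with its parameter chosen of size $\Theta(m)$, then supplies the range-and-size estimate: varying $N$ by a factor of $k^\ell$ as one steps $j\mapsto j+1$ stitches these APs together into an AP-cover of every sufficiently large integer in each residue class modulo $k^R(U-T)$ that is reached.

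To eliminate the residue obstruction, I invoke $\gcd(S)=1$: by examining short accepting paths of the DFA, one locates a finite $F\subseteq S$ with $\gcd F=1$ and $\max F\le k^{O(m)}$. The image of $F$ generates $\mathbb{Z}/k^R(U-T)\mathbb{Z}$, so any residue mod $k^R(U-T)$ is a sum of at most $O(k^R(U-T))\le k^{4m}$ elements of $F$. For a target $n\ge M$, I choose a short adjustment sum $f$ of $F$-elements so that $n-f$ lands in an AP-reachable residue, and then express $n-f$ as an $N$-fold sum from a suitable $S_j$; the total summand count is $N+|f|$.

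The main obstacle will be the bookkeeping: coordinating $j$, $N$, and $f$ so that the individually narrow AP ranges together cover $[M,\infty)$, keeping $f$ small enough that $n-f$ stays in the AP range, and combining the constants cleanly. With $L,R<m$, $\ell<3m$, and Cantor parameter $t\approx 8m$ in the bound $N\le k^{2L+2\ell+t+1}$, the Cantor side contributes $\le k^{16m+3}$ summands; absorbing the $\le k^{4m}$ residue-correction summands yields $N\le 5k^{16m+3}$. The bound $M\le 3k^{16m+5}$ arises from the lower Cantor endpoint $k^{L+\ell+1}$ lifted by the scaling $k^{L+j\ell+R}$ for the smallest usable $j$.
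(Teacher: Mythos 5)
Your overall architecture---pumping words $s\{t,u\}^*v$ from the DFA, controlling the sumset of the resulting ``thin copy'' of $S$, then using $\gcd(S)=1$ to wipe out the residue obstruction---parallels the paper's, and your final bound arithmetic lands on the right exponents.  But as written the proposal has genuine gaps in the middle step, and it also conflates two distinct mechanisms (the explicit integer-AP calculation and Lemma~\ref{lem: hare}), which are not interchangeable.

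First, the stitching.  You claim that for $N\ge k^\ell-1$ the sumset $N\cdot S_j$ is a full AP of common difference $k^R(U-T)$ spanning $[N\min S_j,\,N\max S_j]$; the carrying argument does give that, and it is a correct and tidy observation.  However, for fixed $N$ these ranges do \emph{not} overlap as $j$ steps to $j+1$: one computes that $\max S_j<\min S_{j+1}$ whenever $T>0$ or $\alpha>0$ (the magnitudes jump by a factor of roughly $k^\ell$), so the AP blocks leave gaps.  Your fix of ``varying $N$ by a factor of $k^\ell$'' restores coverage of magnitudes but changes the residue $N\min S_j\bmod k^R(U-T)$ as $N$ moves, so the residue class that your AP hits is no longer fixed.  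This entangles the $N$-choice and the $f$-correction in a way the proposal does not resolve; you would need to argue, for every target $n$, that some admissible pair $(j,N)$ together with a bounded $f$ hits $n$, and this is exactly the bookkeeping that the paper avoids by proving syndeticity of $S^{\le k^{11m+1}}$ (Proposition~\ref{prop: syndetic}) rather than any AP structure.  Syndeticity only requires landing \emph{near} $n$, sidestepping residue tracking entirely; the gcd step (Lemma~\ref{lem: gcd}) then supplies a run of $c+1$ consecutive representable integers to bridge the gap.

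Second, the role of Lemma~\ref{lem: hare} in your sketch is unclear.  That lemma is a statement about sums of the \emph{real} Cantor set $C(u;y,z)$ filling a real interval; it does not directly say anything about integer sumsets $N\cdot S_j$.  The paper's Proposition~\ref{prop: syndetic} bridges this by approximating each real summand by an element of $C_\ell(u,v;y,z)$ (a $k$-adic rational whose scaling by $k^{L+\ell s+K}$ lies in $S$) and carefully bounding the accumulated approximation error to get the syndetic constant $2k^{12m+1}$.  In your proposal this approximation step is absent, so quoting ``$N\le k^{2L+2\ell+t+1}$'' from Lemma~\ref{lem: hare} as a ``range-and-size estimate'' for the integer APs does not follow.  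Either do the discrete carrying argument end to end (in which case you do not need Lemma~\ref{lem: hare} at all, but you do need the overlap/residue coordination above), or follow the real-analysis route and include the approximation step.

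Third, the residue-correction count.  You assert that any residue mod $d:=k^R(U-T)$ can be written as a sum of $O(d)\le k^{4m}$ elements of a fixed finite $F\subseteq S$ with $\gcd F=1$.  The bound you need must account for the size of the B\'ezout coefficients as well; the Borosh--Treybig argument (as used in Lemma~\ref{lem: gcd}) gives roughly $2d\cdot(\max F)\cdot|F|\le 2d\,k^{4m+2}$ summands to cover a window of length $d$, which is larger than $d$ by the extra factor $k^{4m+2}$.  Your ``$O(d)$'' undercounts this, and since you still claim the final bound $N\le 5k^{16m+3}$, the arithmetic in your last paragraph does not close without that factor being tracked.

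In short: the plan has the right skeleton, and the explicit AP computation is a nice discrete substitute for the Cantor set observation, but the proposal as stated leaves unproved exactly the steps that are hardest---overlapping the ranges while controlling residues, and turning the gcd hypothesis into a representation with a certified summand count.  The paper's route through syndeticity plus a run of consecutive representable integers is designed to make those two difficulties independent.
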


\begin{remark}
We note that the non-sparse and gcd hypotheses on $S$ are, in fact,
necessary to obtain the conclusion of the statement of the theorem.  

If $\gcd(S) = g > 1$, then every sum of elements of $S$ is divisible by $g$.

On the other hand, if $S$ is a sparse $k$-automatic set then $\pi_S(x)=O((\log x)^d)$ for some $d\ge 0$.  
In particular, there is some $C>0$  such that for
all $x \geq 2$
there are at most $C(\log x)^d$
elements of $S$ that are $< x$.   Thus there are
at most $C^i(\log x)^{di} $ elements of $S$ smaller than
$x$ that can be
written as the sum of $i$ elements of $S$.
Hence there are at most $\sum_{0 \leq i \leq I} 
C^i (\log x)^{di}$ elements of $S$ smaller than
$x$ that can be written as the sum of at most
$I$ elements of $S$.  But this is
$O((\log x)^{dI + 1})$, which for large $x$
is smaller than $x$.  
\label{remark:main}
\end{remark}
This remark combined with Theorem \ref{thm:main} easily gives Theorem \ref{thm:intro}.
\begin{remark}
\label{remark:main2}

The bounds in Theorem~\ref{thm:main} are close to optimal.
If one considers the set $S$ of all natural numbers whose base-$k$
expansion has $j$ digits, for $j \geq 0$ and $j\equiv -1 $ (mod $m$),
then the minimal DFA accepting $S$ has size $m$.  On the other hand, every element of $S$ has size at least $k^{m-2}$. So for each natural number 
$d \geq 1$ the interval $[1,k^{md-2}-1]\cap S$ has size at most
$k^{m(d-1)-1}-1$. Thus $k^{md-2}-1$ cannot be expressed
as a sum of fewer than $k^{m-2}$ elements of $S$ for $m\ge 2$.  
\end{remark}

Before we prove Theorem~\ref{thm:main}, we need some auxiliary results.  We recall that a subset $T$ of the natural numbers is {\it $c$-syndetic} for a natural number $c$ if $n\in T$ implies that there exists $i\in \{1,\ldots ,c\}$ such that $n+i\in T$.  If $T$ is $c$-syndetic for some $c$, we say that $T$ is 
{\it syndetic}.

\begin{proposition}
Let $k\ge 2$ be a natural number and let $S$ be a non-sparse $k$-automatic subset of the natural numbers whose minimal accepting DFA has $m$ states.  If $T$ is the set of all numbers that can be written as a sum of at most $k^{11m+1}$ elements of $S$, then for each $M>k^{7m+1}$ there exists $n\in T$ such that 
$|M-n|< k^{12m+1}$.  In particular, $T$ is $(2k^{12m+1})$-syndetic. 
\label{prop: syndetic}
\end{proposition}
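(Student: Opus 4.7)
The plan is to find a ``Cantor-like'' subset of $S$ mirroring the structure of some $C(u_1;y,z)$ from Section~\ref{sec:Cantor}, and then to lift Lemma~\ref{lem: hare} from the real Cantor set to integer sums in $S$ by a scaling-and-truncation argument.

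First I extract the structure. Since $S$ is non-sparse, the regular language accepted by the minimal DFA has exponential growth by Lemmas~\ref{lem1}--\ref{lem2}, so Lemma~\ref{lem4} supplies words $s,t,u,v$ with $|s|,|v|<m$, $|t|=|u|<3m$, $t\neq u$, and $s\{t,u\}^*v$ contained in this language. Reversing (the DFA reads LSD-first), I set $u_1:=\bar v$, $y:=\bar t$, $z:=\bar u$, $s_1:=\bar s$, and write $\lambda:=|u_1|<m$, $\mu:=|s_1|<m$, $\ell:=|y|=|z|<3m$. Then for every $r\ge 0$ and every $y_1,\dots,y_r\in\{y,z\}$ the integer $[u_1 y_1\cdots y_r s_1]_k$ belongs to $S$.

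Second, I relate these integers to the Cantor set $C:=C(u_1;y,z)$. For $c\in C$ with expansion $0.u_1 y_1 y_2\cdots$, the truncation $c_r:=0.u_1 y_1\cdots y_r$ satisfies $k^{\lambda+r\ell}c_r=[u_1 y_1\cdots y_r]_k$, so the integer
\begin{equation*}
a_r(c):=k^\mu[u_1 y_1\cdots y_r]_k+[s_1]_k\in S
\end{equation*}
obeys $|a_r(c)-k^{\lambda+r\ell+\mu}c|<k^\mu$, since both $[s_1]_k$ and the truncation remainder $k^{\lambda+r\ell+\mu}(c-c_r)$ lie in $[0,k^\mu)$. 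Given $M>k^{7m+1}$, I pick $r\ge 0$ so that $\gamma:=M/k^{\lambda+r\ell+\mu}$ lies in $[k^{\lambda+\ell+1},k^{\lambda+\ell+1+\ell}]$; taking $t=\ell$ in Lemma~\ref{lem: hare} makes these intervals (as $r$ varies) adjacent, with union $[k^{2\lambda+\mu+\ell+1},\infty)\supseteq(k^{7m+1},\infty)$. Lemma~\ref{lem: hare} then writes $\gamma=c^{(1)}+\cdots+c^{(N)}$ with each $c^{(i)}\in C$ and $N\le k^{2\lambda+3\ell+1}\le k^{11m+1}$. Summing,
\begin{equation*}
\Bigl|\sum_{i=1}^N a_r(c^{(i)})-M\Bigr|\le \sum_{i=1}^N|a_r(c^{(i)})-k^{\lambda+r\ell+\mu}c^{(i)}|<Nk^\mu\le k^{12m+1},
\end{equation*}
so $n:=\sum_i a_r(c^{(i)})\in T$ satisfies $|n-M|<k^{12m+1}$, and the $(2k^{12m+1})$-syndeticity of $T$ follows at once.

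The main obstacle is to control the three sources of error simultaneously --- the prefix $[u_1]_k$ buried inside $[u_1 y_1\cdots y_r]_k$, the appended suffix $[s_1]_k$, and the Cantor-tail $c-c_r$ --- while choosing $r$ so that $\gamma$ lands in the regime where Lemma~\ref{lem: hare} applies. Given this, the final exponent accounting, using only $\lambda,\mu<m$ and $\ell<3m$, is routine.
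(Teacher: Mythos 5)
Your proof is correct and follows essentially the same approach as the paper's: both extract a Cantor-like structure $u\{y,z\}^*v$ from Lemma~\ref{lem4}, scale $M$ down by a suitable power of $k$ to land in the interval where Lemma~\ref{lem: hare} applies, decompose the scaled value as a bounded sum of Cantor-set elements, and then truncate each summand and append the suffix to obtain nearby integers in $S$. Your bookkeeping (working with a single shift parameter $r$ and the per-summand error bound $|a_r(c)-k^{\lambda+r\ell+\mu}c|<k^\mu$) is slightly cleaner, and you make the LSD-to-MSD reversal explicit where the paper glosses over it, but the underlying argument is the same.
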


\begin{proof} Since $S$ is non-sparse, by Lemma~\ref{lem4} we have that there exist words $u,y,z,v\in \Sigma_k^*$ with $y\neq z$ and $|u|,|v|\le m$, $|y|=|z|\le 3m$ such that $\mathcal{L}(S)$ contains $u\{y,z\}^*v$.  Let $L=|u|$ and $s=|y|=|z|$.  By Lemma \ref{lem: hare}, taking $t=s$, each $\alpha\in [k^{L+s+1}, k^{L+2s+1}]$ can be expressed as a sum of at most $k^{2L+3s+1}\le k^{11m+1}$ elements from $C(u;y,z)$.

Now let $0\le \alpha<\beta<1$ be real numbers. Suppose that $M$ is a natural number with base-$k$ expansion $x_0x_1\cdots x_d$ (and $x_0\neq 0$) with $d\ge \max(L+2s+1,K+2L+s+2)$. We let $x$ denote the $k$-adic rational number with base-$k$ expansion $0.x_0x_1\cdots x_d$.  Then for
$j\in \{0,1,\ldots ,s-1\}$, the number $k^{L+s+2+j} x$ has base-$k$ expansion
$$x_0x_{1}\cdots x_{L+s+j+1}. x_{L+s+j+2}\cdots x_d\in [k^{L+s+1},k^{L+2s+1}],$$
and so by Lemma~\ref{lem: hare} there exist $r\le k^{2L+3s+1}$ and $y_1,\ldots ,y_r\in C(u;y,z)$ such that  
$y_1+\cdots +y_r = k^{L+s+2+j} x$.  

Let $\ell$ be a positive integer and let $C_{\ell}(u,v;y,z)$ denote the set of $k$-adic rationals whose base-$k$ expansions are of the form $0.u w_1w_2\cdots w_{\ell} v$ with $w_1,\ldots ,w_{\ell}\in \{y,z\}$ and let $K$ denote the length of $v$.  Observe that given $\epsilon >0$ we have that there is a natural number $N$ such that whenever $x\in C(u;y,z)$ and $\ell>N$ there exists $x'\in C_{\ell}(u,v;y,z)$ such that $|x-x'|<k^{-\ell s - L}$.  In particular, there exist
$y_{1,\ell},y_{2,\ell},\ldots ,y_{r,\ell} \in C_{\ell}(u,v;y,z)$ such that $|y_{i,\ell}-y_i|<k^{-\ell s - L}$ for $i=1,\ldots ,r$.

Thus
$$|y_{1,\ell}+\cdots +y_{r,\ell} - k^{L+s+2+j} x| < rk^{-\ell s -L}\le  k^{2L+3s+1} k^{-\ell s -L} = k^{L+(3-\ell)s+1}.$$
Observe that $k^{L+\ell s + K} y_{i,\ell} \in S$ for $i=1,\ldots ,r$ and so
$k^{L+\ell s+ K}y_{1,\ell} +\cdots + k^{L+\ell s+ K}y_{r,\ell}$ is a sum of at most $k^{2L+3s+1}$ elements of $S$. By construction it is at a distance of at most $k^{L+\ell s + K} k^{L+(3-\ell)s+1} = k^{2L+3s+K+1}$ from
$k^{(\ell+1) s+2L+K+2+j}x$.  Since $j$ can take any value in $\{0,1,\ldots ,s-1\}$ and since $d>K+2L+s+2$, we see that we can find an element in $S^{\le r}$ that is at a distance of at most $k^{2L+3s+K+1}$ from
$M$.  Finally, since $L+2s+1, K+2L+s+2 \le 7m+1$ and $2L+3s+K+1\le 12m+1$, we obtain the desired result.  
\end{proof}

Before proving Theorem \ref{thm:main} 
we need two final results about automatic sets.

\begin{lemma}
Let $k \geq 2$, and suppose $S \subseteq \Enn$ is a $k$-automatic set
and whose minimal accepting DFA has $m$ states.  If $\gcd(S) = 1$ then
there exist distinct integers $s_1, s_2, \ldots, s_{\ell} \in S$, all
less than $k^{2m+2}$, such that
$\gcd(s_1, s_2, \ldots, s_{\ell}) = 1$.
\label{gcd2}
\end{lemma}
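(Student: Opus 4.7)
My plan is to prove the stronger claim that $\gcd\{s \in S : s < k^{2m+2}\} = 1$. From this, the required distinct $s_1, \ldots, s_\ell$ can be extracted by greedily selecting elements from the finite set $P := \{s \in S : s < k^{2m+2}\}$ so that the running gcd strictly decreases at each step; since each selection at least halves the gcd, the process terminates in $\ell = O(m \log k)$ steps.

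For the stronger claim I would argue by contradiction: suppose $d := \gcd P > 1$, and aim to show that $d$ divides every element of $S$, which would contradict $\gcd(S) = 1$. The argument proceeds by strong induction on the length of the base-$k$ representation $w$ (read LSB-first) of $s \in S$. The base case $s < k^{2m+2}$ is immediate. For the inductive step, $|w| \geq 2m+3$, so the DFA's run on $w$ visits at least $2m+4$ positions among only $m$ distinct states; pigeonhole within the first $m+1$ positions produces a decomposition $w = xyz$ with $|xy| \leq m$, $|y| \geq 1$, and $xz \in L$. By induction $d \mid [xz]_k$, and a direct calculation gives
$$[w]_k - [xz]_k = k^{|x|}\bigl([y]_k + (k^{|y|}-1)[z]_k\bigr),$$
so it remains to show that $d$ divides the right-hand side.

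The main obstacle will be verifying this divisibility, which requires transferring information about $d$ from the short elements of $S$ (where divisibility is known) to the ``cycle contribution'' arising from pumping. My strategy is to split $d = d_1 d_2$ with $\gcd(d_1, k) = 1$ and $d_2 \mid k^N$ for suitable $N$, handling each factor separately. For the $d_2$-part: divisibility by $d_2$ depends only on the trailing base-$k$ digits, which are preserved under pumping; a second pigeonhole applied to the \emph{last} $m+1$ positions of the state sequence (available because $|w| \geq 2m+3$) lets me choose the pumping cycle near the end of $w$, making $|x|$ large enough for $k^{|x|}$ to absorb $d_2$. For the $d_1$-part, I would compare the pumping decomposition of $w$ against an auxiliary one built from a shortest path from $q_0$ to the cycle state $q$, the cycle $y$ itself, and a shortest path from $q$ to a final state; the careful choice of these short auxiliaries—and the verification that the corresponding accepted words lie below the threshold $k^{2m+2}$—is the crux of the argument and is precisely where the ``$+2$'' margin in the exponent becomes essential, giving just enough slack to fit the short witnesses within the range where divisibility by $d$ is already known. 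Combining the two parts closes the induction and yields the desired contradiction.
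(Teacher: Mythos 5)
Your overall strategy is genuinely different from the paper's. Both proofs split according to $\gcd(d,k)$, but the paper bounds directly the first integer $N+1$ that brings $\gcd(S\cap[1,N+1])$ down to $1$, via the derived automaton $S_a=\{n:kn+a\in S\}$ in one case and an arithmetic-progression/pigeonhole argument on the sets $T_i$ in the other. You instead run a bottom-up induction showing that $d:=\gcd\{s\in S: s<k^{2m+2}\}$ divides \emph{every} element of $S$, using the pumping lemma to step down. That's a legitimately different route, and the $d_1$-part can be made to work: taking $x'=x$ (already short, $|x|\le m-1$) and $z'$ a shortest word with $\delta(q,z')\in F$ (so $|z'|\le m-1$), both $xz'$ and $xyz'$ have length at most $2m-1<2m+2$, giving $d\mid[xz']_k$ and $d\mid[xyz']_k$; combined with $d\mid[xz]_k$ from the inductive hypothesis and $\gcd(d_1,k)=1$, this yields $d_1\mid[w]_k$.

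However, the $d_2$-part has a genuine gap. Pigeonholing on the last $m+1$ positions only guarantees $|x|\ge|w|-m$, so in the borderline case $|w|=2m+3$ you get $|x|\ge m+3$. But the least $v$ with $d_2\mid k^v$ need not be that small: $d_2$ divides $\min(S\setminus\{0\})<k^m$, and when $k$ is not a prime power this allows $v$ to be as large as roughly $m\log_2 k$. Concretely, with $k=6$ and $d_2=2^{2m}$ (permitted, since $2^{2m}<6^m$), one needs $v\ge 2m$, which exceeds $m+3$ for $m\ge 4$. So ``$k^{|x|}$ absorbs $d_2$'' fails for words of length between $2m+3$ and about $3m$, and the induction does not close.

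The good news is that the $d_1/d_2$ split is unnecessary, and the missing piece is already in your own toolkit. Using the \emph{same} short auxiliaries $xz'$ and $xyz'$ and the \emph{same} pumping decomposition $w=xyz$, one has the identity
\[
[xyz]_k \;=\; [xyz']_k \;+\; k^{|y|}\bigl([xz]_k-[xz']_k\bigr),
\]
which exhibits $[w]_k$ as an integer combination of three quantities all already known to be divisible by $d$ (the first and third because their lengths are $\le 2m-1$, the middle one by the inductive hypothesis). No coprimality with $k$ is used, so this handles all of $d$ in one stroke and closes the induction. Incidentally, since these witnesses have length $\le 2m-1$, the ``$+2$'' slack you invoke is not actually what makes the short auxiliaries fit.
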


\begin{proof}  If $1\in S$, there is nothing to prove, so we may assume that $1\not\in S$.
Let $N$ denote the smallest natural number such that $\gcd(S\cap [1,N+1])=1$ and let $d=\gcd(S\cap [1,N])$.  In particular, $\gcd(d,N+1)=1$. By assumption, $d>1$.  We claim that $N\le k^{2m+2}$.  We write $d=k_0 d_0$, where $\gcd(d_0,k)=1$ and with $k_0$ dividing a power of $k$.  

We first consider the case when $k_0>1$.  Let $a\in \{0,1,\ldots ,k-1\}$ be such that $N+1\equiv \modd{a} {k}$.  Then $\gcd(a,k_0)=1$ since if this is not the case then there is some prime $p$ that divides both $a$, $d$, and $k$ and so $p$ would divide $N+1$ and $d$, which is a contradiction.  Then notice that $S_a:=\{n\ge 0 \colon kn+a\in S\}$ contains $(N+1-a)/k$ and contains no natural number smaller than $(N+1-a)/k$, since if $kn+a\in S$ for some $n< (N+1-a)/k$, then $d|(kn+a)$ and so $k_0|(kn+a)$.  But this is impossible, because if $p$ is a prime that divides $k_0$ (and consequently $k$) then it must divide $a$, which we have shown cannot occur.
Notice that $S_a$ must have a minimal accepting DFA with at most $m$ states.  But it is straightforward to see that a non-empty set whose minimal accepting DFA has at most $m$ states must contain an element of size at most $k^m$ and so 
$N+1 < k^{m+1}+k$.  

Next consider the case when $k_0=1$, so $\gcd(d,k)=1$.  We let $t_s\cdots t_0$ denote the base-$k$ expansion of $N+1$.  We claim that $s\le 2m$.  To see this, suppose that $s>2m$ and let $T_i: = \{n\ge 0\colon k^{i+1}n + [t_i\cdots t_0]_k \in S\}$ for $i=0,\ldots ,m$.
Then since the minimal DFA accepting $S$ has $m$ states we see there exist $i,j\le m$ with $i<j$ such that $T_i=T_j$. 
Also, since each $T_{\ell}$ has a minimal accepting DFA with at most $m$ states and each $T_{\ell}$ is non-empty, we have that there is some least element $r_{\ell} \in T_{\ell}$ with $r_{\ell} < [t_s\cdots t_{\ell+1}]_k \in T_{\ell}$.  Observe that
$r_{\ell}':=k^{\ell+1} r_{\ell}+ [t_{\ell}\cdots t_0]_k<N+1$ and so $d$ divides $r_{\ell}'$.  Moreover, for all $r<[t_s\cdots t_{\ell+1}]_k$ with $r\in T_{\ell}$ we have $k^{\ell+1} r +[t_{\ell}\cdots t_0]_k \equiv \modd{0} {d}$.  Thus since $k$ and $d$ are relatively prime, we see that $T_{\ell}\cap [0,[t_s\cdots t_{\ell+1}]_k-1]$ is non-empty and contained in a single arithmetic progression of difference $d$, but $[t_s\cdots t_{\ell+1}]_k$ is not in this arithmetic progression.  

But now we have that $T_i=T_j$ with $i<j$ and so $T_j\cap [0,[t_s\cdots t_{i+1}]_k-1]$ is contained in a single arithmetic progression mod $d$. 
On the other hand, $T_j\cap  [0,[t_s\cdots t_{j+1}]_k-1]$ is non-empty and contained in a single arithmetic progression mod $d$ and by the above remarks, $[t_s\cdots t_{j+1}]_k<[t_s\cdots t_{i+1}]_k$ is not in this progression, a contradiction.  Thus we see that $s\le 2m$ and so $N < k^{2m+2}$.
\end{proof}

\begin{lemma}
Let $k\ge 2$, $m$ and $c$ be natural numbers and let $S\subseteq \mathbb{N}$ be a $k$-automatic set with $\gcd(S)=1$ and whose minimal accepting DFA has $m$ states.  If $U$ is the set of elements that can be expressed as a sum of at most $2c k^{4m+2}$ elements of $S$ then there is some $N\le c k^{4m+4}$ such that $U$ contains $\{N,N+1,\ldots ,N+c\}$.
\label{lem: gcd}
\end{lemma}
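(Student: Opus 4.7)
The plan is to apply Lemma \ref{gcd2} to extract distinct elements $s_1,\ldots,s_\ell\in S$, each less than $\hat M:=k^{2m+2}$, with $\gcd(s_1,\ldots,s_\ell)=1$, and then to use B\'ezout's identity to find two elements $P,Q$ of the numerical semigroup $\Gamma:=\langle s_1,\ldots,s_\ell\rangle$ with $P=Q+1$, both bounded by $\hat M^2=k^{4m+4}$, and each representable as a sum of at most $\hat M=k^{2m+2}$ elements of $S$.  Concretely, writing $1=\sum_i b_i s_i$ with $b_i\in\Z$, I would set $P:=\sum_{b_i>0} b_i s_i$ and $Q:=\sum_{b_i\le 0}(-b_i)s_i$, so that $P,Q\in\Gamma$, $P-Q=1$, $\sigma(P)+\sigma(Q)\le \sum_i|b_i|$, and $P+Q=\sum_i|b_i|s_i$, where $\sigma(\cdot)$ denotes the number of $s_i$-summands in the chosen representation.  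If the B\'ezout coefficients are chosen so that $\sum_i|b_i|\le \hat M$, this yields $P,Q\le \hat M^2$ and $\sigma(P),\sigma(Q)\le \hat M$.

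Granted such $P$ and $Q$, the lemma follows from the identity
$$cQ+j \;=\; (c-j)Q+jP, \qquad j=0,1,\ldots,c,$$
which expresses each of the consecutive integers $cQ,cQ+1,\ldots,cQ+c$ as a sum of $(c-j)\sigma(Q)+j\sigma(P)\le c\cdot\max(\sigma(P),\sigma(Q))\le ck^{2m+2}\le 2ck^{4m+2}$ elements of $S$ (the last inequality uses $k\ge 2$ and $m\ge 0$, so that $k^{2m+2}\le 2k^{4m+2}$).  Setting $N:=cQ\le c\hat M^2=ck^{4m+4}$ then gives $\{N,N+1,\ldots,N+c\}\subseteq U$.

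The main obstacle is securing the B\'ezout bound $\sum_i|b_i|\le \hat M$.  The naive iterated B\'ezout procedure---bringing the running $\gcd$ down to $1$ by combining one generator at a time---can produce multiplicative coefficient blow-up, giving only bounds like $\hat M^{O(\log\hat M)}$.  To obtain the sharper linear-in-$\hat M$ estimate, I would work inside the rank-$(\ell-1)$ kernel lattice of the linear map $(a_1,\ldots,a_\ell)\mapsto\sum_i a_i s_i$, which is generated by the vectors $v_{ij}:=s_j e_i - s_i e_j$ for $i<j$.  Starting from any initial B\'ezout solution, one reduces each coordinate $|b_i|$ into a bounded fundamental domain by successive additions of integer multiples of the $v_{ij}$; restricting beforehand to a greedily chosen minimal subset of at most $\log_2\hat M+1$ of the $s_i$'s (the running gcd must at least halve at each proper reduction step toward $1$) keeps the number of nonzero coefficients small.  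The subtle combinatorial step is to verify that this reduction procedure does bring $\sum_i|b_i|$ down to the claimed bound $\hat M$; once that is secured, the rest of the argument is the clean linear-combination trick sketched above.
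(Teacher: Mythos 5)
Your opening move (invoking Lemma~\ref{gcd2} to extract $s_1,\ldots,s_\ell < k^{2m+2}$ with $\gcd = 1$) matches the paper, and your linear-combination trick $cQ + j = (c-j)Q + jP$ is a clean way to finish once the coefficient bound is in hand. But you have correctly identified, and not closed, the central gap: you need B\'ezout coefficients $b_i$ with $\sum_i |b_i| \le k^{2m+2}$, and the lattice-reduction sketch you give (reducing into a fundamental domain of the kernel lattice, restricting to a greedily chosen subset of generators) is not a proof. You acknowledge as much. This is not a cosmetic omission: your construction genuinely needs the bound on the \emph{sum} $\sum |b_i|$, since $\sigma(P), \sigma(Q)$, $P$, and $Q$ are all controlled by it, and a bound only on $\max_i |b_i|$ --- which is what standard results actually deliver --- would give $\sigma(P) \le \ell \cdot k^{2m+2} \le k^{4m+4}$ and hence $c \cdot k^{4m+4}$ summands, exceeding the allowed $2ck^{4m+2}$ (since $k^2 > 2$), and would give $N = cQ$ up to $c\ell k^{4m+4}$, exceeding the allowed $ck^{4m+4}$.

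The paper sidesteps this difficulty entirely. It cites the theorem of Borosh and Treybig, which produces integers $a_1,\ldots,a_\ell$ with $\sum a_i s_i = 1$ and $\max_i |a_i| \le k^{2m+2}$ --- note: a bound on the \emph{maximum}, not the sum. It then sets $t = ck^{2m+2}$ and $N = t(s_1 + \cdots + t s_\ell)$, so that for $0 \le i \le c$ one has $N + i = \sum_j (t + i a_j) s_j$ with each coefficient $t + i a_j$ nonnegative and at most $2ck^{2m+2}$. Because the shift $t$ is applied to \emph{every} coordinate, only the per-coordinate bound $|a_j| \le k^{2m+2}$ is needed, never a bound on $\sum_j |a_j|$. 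This is the decomposition you would want to adopt: it replaces the hard problem (sharp $\ell_1$-bound on B\'ezout vectors) with the known and citable $\ell_\infty$-bound, at the cost of writing $N + i$ as an $\ell$-term combination rather than a two-term one. If you want to salvage your two-term $P,Q$ approach, you would need to actually prove an $\ell_1$ B\'ezout bound of the form $\sum_i |b_i| \le k^{2m+2}$, which does not follow from iterated two-term B\'ezout (as you note, that blows up) and is not obviously a consequence of Borosh--Treybig either.
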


\begin{proof}
From Lemma~\ref{gcd2} we know there exist
$s_1, s_2, \ldots s_{\ell} \in S$ with $s_1<\cdots <s_{\ell}\le k^{2m+2}$ such that
$\gcd(s_1, \ldots, s_{\ell}) = 1$.

It follows from a result of Borosh and Treybig \cite[Theorem 1]{Borosh&Treybig:1976} that there exist integers $a_1,\ldots ,a_{\ell} \in \mathbb{Z}$ with $|a_i|\le k^{2m+2}$ such that $\sum a_i s_i=1$.  

Now let $t=ck^{2m+2}$ and consider the number $N:=t s_1+\cdots + t s_{\ell}$. For each $i=1,\ldots ,c$ we have that
$N+i = (t+i a_1)s_1+\cdots (t+i a_{\ell}) s_{\ell}$ is a nonnegative integer linear combination of $s_1,\ldots ,s_{\ell}$ and $|t+i a_j| \le 2 c k^{2m+2}$ for 
$j\in \{1,\ldots ,\ell\}$.  Thus we see that if $U$ is the set of integers that can be expressed as at most $2 c k^{2m+2} \ell$ elements of $S$, then $U$ contains $\{N,N+1,\ldots ,N+c\}$ where $N=t s_1+\cdots + t s_{\ell} \le c k^{2m+2} \ell$.  Since $\ell\le k^{2m+2}$, we obtain the desired result.     
\end{proof}

We are now ready for the proof of our first main result.

\begin{proof}[Proof of Theorem \ref{thm:main}] Let $m$ be the size of the minimal accepting DFA for $S$.
By Proposition \ref{prop: syndetic} if $T$ is the set of elements that can be expressed as the sum of at most $k^{11m+1}$ elements of $S$ then $T$ is $2k^{12m+1}$-syndetic.  Let $c=2k^{12m+1}$.  By assumption $\gcd(S)=1$ and so by Lemma \ref{lem: gcd} there is some $N_1\le 2c k^{4m+2}=4k^{16m+3}$ and some natural number $M_1\le c k^{4m+4}\le 2k^{16m+5}$ such that each element from $\{M_1,M_1+1,\ldots ,M_1+c\}$ can be expressed as a sum of at most $N_1$ elements of $\{s_1,\ldots ,s_d\}\subseteq S$.  Then let $M_0$ denote the smallest natural number in $T$.  Since $T\supseteq S$ and the minimal DFA for $S$ has size at most $m$, we see that $M_0\le k^m$.  

We claim that every natural number that is greater than $M:=M_0+M_1\le 3k^{16m+5}$ can be expressed as a sum of at most most $N:=k^{11m+1}+N_1\le 5k^{16m+3}$ elements of $S$.  To see this, suppose, in order to get a contradiction, that this is false.  Then there is some smallest natural number $n>M$ that cannot be expressed as a sum of at most $N$ elements of $S$.  Observe that $n-M_1 > M_0$; since $T$ is syndetic and $M_0\in T$, there is some $t\in T$ with $t\le n-M_1 < t+c$.
Thus $n=t+M_1+j$ for some $j\in \{0,1,\ldots ,c-1\}$.  Since $M_1+j$ is a sum of at most $N_1$ elements of $S$ and $t$ is the sum of at most $k^{11m+1}$ elements of $S$, we see that $n$ is the sum of at most $N$ elements of $S$, contradicting our assumption that $n$ has no such representation.  The result follows. 
\end{proof}

\section{An algorithm}
\label{sec:algorithm}
In this section, we prove Theorem~\ref{thm:intro2},
giving an algorithm to find the smallest number $j$ (if it exists) such that $S$ is an asymptotic additive basis (resp., additive
basis) of order $j$
for the natural numbers, where $S$ is a $k$-automatic set of natural numbers.  
We use the fact that there is an algorithm for deciding the truth
of first-order
propositions (involving $+$ and $\leq$) about automatic sequences
\cite{Bruyere&Hansel&Michaux&Villemaire:1994,Allouche&Rampersad&Shallit:2009,Charlier&Rampersad&Shallit:2012}.

\begin{proof}[Proof of Theorem \ref{thm:intro2}]
From Theorem~\ref{thm:main} and Remark~\ref{remark:main}, 
we know that $S$ forms an asymptotic additive basis of order $j$,
for some $j$, if and only if $S$ is non-sparse and has gcd $1$.
This sparsity criterion can be tested using Lemma~\ref{lem1}.  The
condition $\gcd(S)= 1$ can be tested as follows:  compute the smallest
nonzero member $m$ of $S$, if it exists.  Then $\gcd(S)$ must be a divisor
of $m$.  For each divisor $d$ of $m$, form the assertion 
$$ \forall n\geq 0 \ (n \in S) \implies \exists t \text{ such that } n = dt $$
and check it using the algorithm for first-order predicates mentioned
above.  (Note that for each invocation $d$ is actually a constant,
so that $td$ actually is shorthand for
$\overbrace{t + t + \cdots + t}^d$, which uses
addition and not multiplication.)
The largest such $d$ equals $\gcd(S)$.

Once $S$ passes these two tests, we can 
test if $S$ is an asymptotic additive basis of order
$j$ by writing and checking the predicate
\begin{equation}
\exists M \ \forall n \geq M \  \ \exists x_1, x_2, \ldots, x_j \ \text{ such that } x_1, x_2, \ldots , x_j \in S \ \wedge \ n = x_1 + x_2 +  \cdots  + x_j,
\label{opt1}
\end{equation}
which says every sufficiently large integer is the sum of $j$ elements of $S$. 
We do this for $j = 1, 2, 3, \ldots$ until the smallest such $j$ is found.
This algorithm is guaranteed to terminate in
light of Theorem~\ref{thm:main}.

Finally, once $j$ is known, the optimal $M$ in \eqref{opt1} can be determined
as follows by writing the predicate in \eqref{opt1} together
with the assertion that $M$ is the smallest such integer.  Using the
decision procedure mentioned above, one can effectively create a DFA accepting
$(M)_k$, which can then be read off from the transitions of the DFA.

To test if $S$ is an additive basis of order $j$, we need, in addition
to the non-sparseness of $S$ and $\gcd(S) = 1 $, the condition $1 \in S$,
which is easily checked.  
If $S$ passes these tests, we then write and check the predicate
$$
\forall n \geq 0 \  \ \exists x_1, x_2, \ldots, x_j \ \text{ such that } x_1, x_2, \ldots , x_j \in S \ \wedge \ n = x_1 + x_2 +  \cdots  + x_j,
$$
which says every integer is the sum of $j$ elements of $S$.   We do this
for $j = 1,2,3, \ldots$ until the least such $j$ is found.
\end{proof}

\begin{remark}
The same kind of idea can be used to test if every
element of $\Enn$ (or every sufficiently large element)
is the sum of $j$ {\it distinct} elements of a $k$-automatic set $S$.  For
example, if $j = 3$, we would have to add the additional
condition that 
$$x_1 \not= x_2 \ \wedge \ x_1 \not= x_3 \ \wedge \ 
x_2 \not= x_3 .$$

We can also test if every element is {\it uniquely} representable as
a sum of $j$ elements of $S$.   Similarly, we can
count the number $f(n)$ of representations of $n$ as a sum of $j$
elements of $S$.  It follows from 
\cite{Charlier&Rampersad&Shallit:2012} that, for $k$-automatic sets $S$,
the function $f(n)$ is $k$-regular and one can give an explicit
representation for it.
\end{remark}

\section{Examples}
\label{sec:exam}
In this section, we give some examples that illustrate the power of the algorithm provided in the preceding section.  These examples
can be proved ``automatically''
by the {\tt Walnut} theorem-proving software \cite{Mousavi:2016}.

\begin{example}
Let $S$ be the $3$-automatic set of Cantor numbers 
$${\mathcal{C}} = \{0,2,6,8,18,20,24,26,54,56,60,62,72,74,78,80,162, \ldots \},$$ 
that is, those natural numbers
(including $0$) whose base-$3$ expansions consist of only the digits
$0$ and $2$.  Then every even number is the sum of exactly two elements 
of $\mathcal{C}$.  To see this, consider an even natural number $N$. Write $N/2 = x + y$, choosing the base-$3$ expansions of $x$ and $y$ digit-by-digit as follows:
\begin{itemize}
\item[(a)] if the digit of $N/2$ is $2$, choose $1$ for the corresponding digit in both $x$ and $y$;
\item[(b)] if the digit of $N/2$ is $1$, choose $1$ for the corresponding digit in $x$ and $0$ for the corresponding digit in $y$;
\item[(c)] if the digit of $N/2$ is 0, choose $0$ for the corresponding digit in both $x$ and $y$.
\end{itemize}
Then $N = 2x + 2y$ gives the desired representation.
\end{example}

\begin{example}
Let $S$ be the $2$-automatic set of ``evil" numbers 
$$ {\mathcal{E}} = \{ 0,3,5,6,9,10,12,15,17,18,20,23,24,27,29,30,33,34,36,39,
\ldots \},$$
that is, those natural numbers (including $0$) for which the sum of the binary digits is even (see, e.g.,
\cite[p.~431]{Berlekamp&Conway&Guy:1982}).  Then every integer other than $\{1,2,4,7\}$ is the sum
of three elements of ${\mathcal{E}}$.  In fact, every integer except
$\{ 2, 4 \} \ \cup \ \{ 2\cdot 4^i - 1 \ : \ i \geq 1 \}$ is the sum of two elements of
${\mathcal{E}}$.
\end{example}

\begin{example}
Let $S$ be the $2$-automatic set 
$$ {\mathcal{R}} = \{ n \ : r(n) = -1 \} 
=\{3,6,11,12,13,15,19,22,24,25,26,30,35,38,43,44,45,47, \ldots \},$$
where $r(n)$ is the Golay-Rudin-Shapiro function
\cite{Golay:1949,Golay:1951,Rudin:1959,Shapiro:1952}.
Then every integer except
$\{ 0,1,2,3,4,5,7,8,10,11,13,20\}$ is the sum of two elements
of $ {\mathcal{R}}$.
\end{example}

\begin{example}
Let $S$ be the $4$-automatic set 
$$ {\mathcal{D}} = \{ 0,1,4,5,16,17,20,21,64,65,68,69,80,81,84,85, \ldots \}$$
of integers representable in base $4$ using only the digits $0$ and $1$.
See, for example, \cite{Moser:1962,deBruijn:1964}.  Then every natural
is representable as the sum of three elements of ${\mathcal{D}}$.  In fact,
even more is true:  every natural number is uniquely representable as
the sum of one element chosen from ${\mathcal{D}}$ and one element
chosen from $2{\mathcal{D}}$.
\end{example}


\begin{thebibliography}{10}

\bibitem{Allouche&Rampersad&Shallit:2009}
J.-P. Allouche, N.~Rampersad, and J.~Shallit.
\newblock Periodicity, repetitions, and orbits of an automatic sequence.
\newblock {\em Theoret. Comput. Sci.}, 410:2795--2803, 2009.

\bibitem{Allouche&Shallit:2003}
J.-P. Allouche and J.~Shallit.
\newblock {\em Automatic Sequences: Theory, Applications, Generalizations}.
\newblock Cambridge, 2003.

\bibitem{Berlekamp&Conway&Guy:1982}
E.~R. Berlekamp, J.~H. Conway, and R.~K. Guy.
\newblock {\em Winning Ways for your Mathematical Plays}, volume 2: Games in
  Particular.
\newblock Academic Press, 1982.

\bibitem{Borosh&Treybig:1976}
I.~Borosh and L.~B. Treybig.
\newblock Bounds on positive integral solutions of linear {D}iophantine
  equations.
\newblock {\em Proc. Amer. Math. Soc.}, 55(2):299--304, 1976.

\bibitem{deBruijn:1964}
N.~G.~de Bruijn.
\newblock Some direct decompositions of the set of integers.
\newblock {\em Math. Comp.}, 18:537--546, 1964.

\bibitem{Bruyere&Hansel&Michaux&Villemaire:1994}
V.~{Bruy\`ere}, G.~Hansel, C.~Michaux, and R.~Villemaire.
\newblock Logic and $p$-recognizable sets of integers.
\newblock {\em Bull. Belg. Math. Soc.}, 1:191--238, 1994.
\newblock Corrigendum, {\it Bull.\ Belg.\ Math.\ Soc.} 1:577, 1994.

\bibitem{Cabrelli&Hare&Molter:1997}
C.~A. Cabrelli, K.~E. Hare, and U.~M. Molter.
\newblock Sums of {Cantor} sets.
\newblock {\em Ergod. Th. \& Dynam. Sys.}, 17:1299--1313, 1997.

\bibitem{Charlier&Rampersad&Shallit:2012}
E.~Charlier, N.~Rampersad, and J.~Shallit.
\newblock Enumeration and decidable properties of automatic sequences.
\newblock {\em Int. J. Found. Comput. Sci.}, 23:1035--1066, 2012.

\bibitem{Gawrychowski&Krieger&Rampersad&Shallit:2010}
P.~Gawrychowski, D.~Krieger, N.~Rampersad, and J.~Shallit.
\newblock Finding the growth rate of a regular or context-free language in
  polynomial time.
\newblock {\em Int. J. Found. Comput. Sci.}, 21:597--618, 2010.

\bibitem{Ginsburg&Spanier:1966}
S.~Ginsburg and E.~Spanier.
\newblock Bounded regular sets.
\newblock {\em Proc. Amer. Math. Soc.}, 17:1043--1049, 1966.

\bibitem{Golay:1949}
M.~J.~E. Golay.
\newblock Multi-slit spectrometry.
\newblock {\em J. Optical Soc. America}, 39:437--444, 1949.

\bibitem{Golay:1951}
M.~J.~E. Golay.
\newblock Static multislit spectrometry and its application to the panoramic
  display of infrared spectra.
\newblock {\em J. Optical Soc. America}, 41:468--472, 1951.

\bibitem{Hopcroft&Ullman:1979}
J.~E. Hopcroft and J.~D. Ullman.
\newblock {\em Introduction to Automata Theory, Languages, and Computation}.
\newblock Addison-Wesley, 1979.

\bibitem{Ibarra&Ravikumar:1986}
O.~H. Ibarra and B.~Ravikumar.
\newblock On sparseness, ambiguity, and other decision problems for acceptors
  and transducers.
\newblock In B.~Monien and G.~Vidal-Naquet, editors, {\em STACS 86}, volume 210
  of {\em Lect. Notes in Comput. Sci.}, pages 171--179. Springer, 1986.

\bibitem{Kreisel&Lacombe&Shoenfield:1959}
G.~Kreisel, D.~Lacombe, and J.~R. Shoenfield.
\newblock Partial recursive functionals and effective operations.
\newblock In A.~Heyting, editor, {\em Constructivity in Mathematics}, Studies
  in Logic and the Foundations of Mathematics, pages 290--297. North-Holland,
  1959.

\bibitem{Lothaire:1997}
M.~Lothaire.
\newblock {\em Combinatorics on words}.
\newblock Cambridge Mathematical Library. Cambridge University Press,
  Cambridge, 1997.
\newblock With a foreword by Roger Lyndon and a preface by Dominique Perrin,
  Corrected reprint of the 1983 original, with a new preface by Perrin.

\bibitem{Moser:1962}
L.~Moser.
\newblock An application of generating series.
\newblock {\em Math. Mag.}, 35:37--38, 1962.

\bibitem{Mousavi:2016}
H.~Mousavi.
\newblock Automatic theorem proving in {Walnut}.
\newblock Preprint available at \url{https://arxiv.org/abs/1603.06017}, 2016.

\bibitem{Nathanson:1996}
M.~B. Nathanson.
\newblock {\em Additive Number Theory: The Classical Bases}.
\newblock Springer, 1996.

\bibitem{Rudin:1959}
W.~Rudin.
\newblock Some theorems on {Fourier} coefficients.
\newblock {\em Proc. Amer. Math. Soc.}, 10:855--859, 1959.

\bibitem{Shapiro:1952}
H.~S. Shapiro.
\newblock Extremal problems for polynomials and power series.
\newblock Master's thesis, MIT, 1952.

\bibitem{Szilard&Yu&Zhang&Shallit:1992}
A.~Szilard, S.~Yu, K.~Zhang, and J.~Shallit.
\newblock Characterizing regular languages with polynomial densities.
\newblock In I.~M. Havel and V.~Koubek, editors, {\em MFCS 1992}, volume 629 of
  {\em Lect. Notes in Comput. Sci.}, pages 494--503. Springer, 1992.

\bibitem{Trofimov:1981}
V.~I. Trofimov.
\newblock Growth functions of some classes of languages.
\newblock {\em Kibernetika}, 17:9--12, 1981.
\newblock In Russian. English translation in {\it Cybernetics} 17:727--731,
  1981.

\bibitem{Vaughan&Wooley:1991}
R.~C. Vaughan and T.~D. Wooley.
\newblock On {W}aring's problem: some refinements.
\newblock {\em Proc. London Math. Soc. (3)}, 63(1):35--68, 1991.

\bibitem{Wei&Wooley:2015}
B.~Wei and T.~D. Wooley.
\newblock On sums of powers of almost equal primes.
\newblock {\em Proc. Lond. Math. Soc. (3)}, 111(5):1130--1162, 2015.

\bibitem{Wooley:1992}
T.~D. Wooley.
\newblock Large improvements in {W}aring's problem.
\newblock {\em Ann. of Math. (2)}, 135(1):131--164, 1992.

\bibitem{Z:1979}
D.~Zwillinger.
\newblock A {G}oldbach conjecture using twin primes.
\newblock {\em Math. Comp.}, 33:1071, 1979.

\end{thebibliography}
\end{document}